\def\section{\@startsection{section}{1}%
	\z@{.7\linespacing\@plus\linespacing}{.5\linespacing}%
	{\bfseries
		\centering
}}
\def\@secnumfont{\bfseries}
\newcommand\reallywidehat[1]{%
	\savestack{\tmpbox}{\stretchto{%
			\scaleto{%
				\scalerel*[\widthof{\ensuremath{#1}}]{\kern-.6pt\bigwedge\kern-.6pt}%
				{\rule[-\textheight/2]{1ex}{\textheight}}
			}{\textheight}%
		}{0.5ex}}%
	\stackon[1pt]{#1}{\tmpbox}%
}
\numberwithin{equation}{section}
\newtheorem{theorem}{Theorem}[section]
\newaliascnt{lemma}{theorem}
\newaliascnt{proposition}{theorem}
\newtheorem{proposition}[proposition]{Proposition}
\newaliascnt{assumption}{theorem}
\newtheorem{assumption}[assumption]{Assumption}
\newaliascnt{corollary}{theorem}
\newtheorem{corollary}[corollary]{Corollary}
\newaliascnt{definition}{theorem}
\newtheorem{definition}[definition]{Definition}
\newaliascnt{example}{theorem}
\newaliascnt{remark}{theorem}
\newtheorem{remark}[remark]{Remark}
\newaliascnt{hypothesis}{theorem}
\newaliascnt{property}{theorem}
\let\originalleft\left
\let\originalright\right
\renewcommand{\left}{\mathopen{}\mathclose\bgroup\originalleft}
\renewcommand{\right}{\aftergroup\egroup\originalright}
\newcommand{\doublewidetilde}[1]{{%
		\mathpalette\double@widetilde{#1}%
}}
\newcommand{\double@widetilde}[2]{%
	\sbox\z@{$\m@th#1\widetilde{#2}$}%
	\ht\z@=.9\ht\z@
	\widetilde{\box\z@}%
}
\renewcommand{\d}{\/\mathrm{d}\/}
\def\w{\textbf{W}^{\varepsilon}_{{\theta}^{\varepsilon}}}
\def\L{\mathrm{L}}
\def\F{\mathrm{F}}
\def\C{\mathrm{C}}
\def\h{\mathbf{h}}
\def\J{\mathrm{J}}
\def\D{\mathrm{D}}
\def\y{\mathbf{y}}
\def\X{\mathbb{X}}
\def\x{\mathbf{x}}
\def\z{\mathbf{z}}
\def\v{\mathbf{v}}
\def\V{\mathbb{V}}
\def\w{\mathbf{w}}
\def\W{\mathrm{W}}
\def\G{\mathbb{G}}
\def\no{\nonumber}
\def\V{\mathbb{V}}
\def\U{\mathrm{U}}
\def\u{\mathbf{u}}
\def\H{\mathbb{H}}
\def\n{\mathbf{n}}
\def\p{\mathbf{p}}
\newcommand{\R}{\mathbb{R}}
\renewcommand{\d}{\/\mathrm{d}\/}
\newcommand{\Addresses}{{
		\footnote{

			\noindent \textsuperscript{1}School of Mathematics,
			Indian Institute of Science Education and Research, Thiruvananthapuram (IISER-TVM),
			Maruthamala PO, Vithura, Thiruvananthapuram, Kerala, 695 551, INDIA.  \par\nopagebreak \noindent
			\textit{e-mail:} \texttt{manikabag19@iisertvm.ac.in}

			\noindent \textsuperscript{2}Department of Mathematics, Ulsan National Institute of Science $\&$ Technology (UNIST), 50 UNIST-gil, Eonyang-eup, Ulju-gun, Ulsan, South Korea \par\nopagebreak
			\noindent  \textit{e-mail:} \texttt{tanibsw91@gmail.com}
			
			\noindent \textsuperscript{3}School of Mathematics, Indian Institute of Science Education and Research, Thiruvananthapuram (IISER-TVM),
			Maruthamala PO, Vithura, Thiruvananthapuram, Kerala, 695 551, INDIA  \par\nopagebreak \noindent
			\textit{e-mail:} \texttt{sheetal@iisertvm.ac.in}

			\noindent \textsuperscript{*}Corresponding author.

			\medskip\noindent
			{\bf Acknowledgments:}  Manika Bag  would like to thank the  Indian Institute of Science Education and Research, Thiruvananthapuram, for providing financial support and the stimulating environment for the research. The work of Tania Biswas was supported by the National Research Foundation of Korea(NRF) grant funded by the Korea government(MSIT)(NO.2020R1A4A1018190). The work of Sheetal Dharmatti is supported by SERB  grant SERB-CRG/2021/008278 of the Government of India.
			
}}}
\begin{document}

	\title[Optimal boundary control for the Cahn-Hilliard-Navier-Stokes Equations]{Optimal boundary control for the Cahn-Hilliard-Navier-Stokes Equations	\Addresses	}

	\author[Manika Bag, T. Biswas and S. Dharmatti]
	{Manika Bag\textsuperscript{1}, Tania Biswas\textsuperscript{2} and Sheetal Dharmatti\textsuperscript{3*}} 
\maketitle

\begin{abstract}
In this work, we study an optimal boundary control problem for a Cahn–Hilliard– Navier–Stokes(CHNS) system in a two-dimensional bounded domain. The CHNS system consists of a Navier–Stokes equation governing the fluid velocity field coupled with a convective Cahn–Hilliard equation for the relative concentration of the fluid. An optimal control problem is formulated as the minimization of a cost functional subject to the controlled CHNS system where the control acts on the boundary of the Navier–Stokes equations. We first prove that there exists an optimal boundary control. Then we establish that the control-to-state operator is Fréchet differentiable and derive first-order necessary optimality conditions in terms of a variational inequality involving the adjoint system.
\end{abstract}

  \keywords{\textit{Key words:} Cahn-Hilliard-Navier-Stokes system, Boundary control, Optimal control, Lagrange multipliers, Lagrangian principle.}

  	Mathematics Subject Classification (2020): 49J20, 49J50, 76B99.

\section{Introduction}
Optimal control problems for fluid flows have been a topic of interest for experimenters and designers for many decades. Mathematicians and computational scientists have lately shown a strong interest in them as well because of the beautiful mathematics behind them. It is well known that the Navier-Stokes equations describe the motion of a fluid. When we consider the evolution of an incompressible isothermal mixture of two immiscible fluids, then phase separation will occur. This phenomenon is described by a diffuse interface model, which is also known as Cahn-Hilliard-Navier-Stokes (CHNS) system [see \cite{liu, gurtin, hohenberg1977theory}, for more details]. Optimal Control problems for Navier-Stokes equations have been studied extensively for both the case of distributed control and boundary control. Motivated by the study of optimal control problems for single fluid flow and its application in various engineering fields, atmospheric sciences, etc, we are interested to study the optimal control problem for the binary mixture described by the CHNS system. Optimal control problems associated with the CHNS system have many applications where one is interested in influencing the phase separation
process, such as in the separation of binary alloys, the formation of polymeric membranes, etc. The boundary control of the fluid can be implemented in various ways in terms of blowing and suction etc. However, real-life implementation of distributed control is many times jkmchallenging. Optimal control problems associated with the CHNS system have been studied by several authors during the last decades analytically as well as numerically. In this paper, we are interested in studying a boundary optimal control problems related to the CHNS system. Mathematically, boundary control problems are harder to deal with, specifically obtaining the optimality conditions, as higher regularity of the solution is often required. Also, the mathematical analysis becomes more challenging since the CHNS system is a highly nonlinear coupled system. To the best of our knowledge, most of the analytical work related to optimal control problems for the CHNS system is devoted to the case of distributed control. This work is the first contribution to the analytic study of the boundary control problem for the CHNS system. Throughout this paper, we assume $T$ is given finite final time and $\Omega$ is a bounded domain in $\mathbb{R}^2$ and $\Gamma$ is the sufficiently smooth boundary of $\Omega$ and $\Gamma'$ is an open subset of $\Gamma$. We set 
\begin{align*}
   Q = \Omega \times (0, T), \quad \Sigma = \Gamma \times (0, T), \quad \Sigma' = \Gamma'\times (0, T).
   \end{align*} 


The system under consideration in this work is the following controlled CHNS system where control acts on the boundary of the Navier-Stokes equation  as a time-dependent Dirichlet boundary condition: 
\begin{equation}\label{equ P2}
\left\{
\begin{aligned}
  \varphi_t + \mathbf{u} \cdot \nabla \varphi &= \Delta \mu, \, \, \text{ in } Q, \\
        \mu &= -\Delta\varphi + F'(\varphi), \\
        \mathbf{u}_t - \nu \Delta\mathbf{u} + (\mathbf{u}\cdot \nabla)\mathbf{u} + \nabla \pi &= \mu \nabla \varphi, \, \, \text{ in } Q, \\
        div~\mathbf{u} & = 0, \, \, \text{ in } Q, \\
        \frac{\partial\varphi}{\partial\n} = 0, \,  \frac{\partial\mu}{\partial\mathbf{n}} & = 0, \,\, \text{ on } \Sigma, \\
        \u  &= M \h, \, \, \text{ on } \Sigma,\\ 
        \mathbf{u}(0) = \mathbf{u}_0 ,\,\, \varphi(0) & = \varphi_0, \,\, \text{ in } \Omega.  
\end{aligned}   
\right.
\end{equation} 
Here $\h$ acts as a control on a part of the boundary $\Gamma'\subset \Gamma$ i.e. $\text{supp}(\h) \subset \Gamma'$, $M$ is an operator defined in Section 2, $\u(\x, t)$ is the average velocity of the fluid, and $\varphi(\x, t)$ is the relative concentration of the fluid.  The density is taken as matched density, i.e., constant density, which is equal to 1. Moreover, $\mu$ denotes the chemical potential, $\pi$ denotes the pressure, $\nu$ denotes the viscosity, and $F$ is a double-well potential. Furthermore, $\mu$ is the first variation of the Helmholtz free energy functional
\begin{align}
\mathcal{E}(\varphi) := \int_{\Omega} \left( \frac{1}{2} | \nabla \varphi|^2 + \F(\varphi (x))\right)\, \d x,
\end{align}
where $\F$ is a  double-well potential of the regular type. A typical example of regular $\F$ is
\begin{align}\label{regular}
    \F(s)=(s^2-1)^2, \ s \in \mathbb{R}.
\end{align}

We now discuss some of the works available in the literature for the solvability of system \eqref{equ P2} when $\h =0$. In \cite{boyer}, the authors established the existence and uniqueness of weak solutions and strong solutions in 2 and 3 dimensions in the case of regular potential (for strong solution global in 2D and local in time in 3D). The authors have proved the existence and uniqueness results for the case of singular potential in \cite{abel3, giorgini}. In \cite{gg}, the authors have studied the asymptotic behavior, where they have proved the existence of global and exponential attractors. For the system \eqref{equ P2}, when the time-dependent Dirichlet boundary condition for the Navier-Stokes equation is considered, in \cite{MTS}, we have proved the existence, uniqueness of weak solution and existence of strong solution. Moreover, we refer [\cite{abel2, abel1, abel, ggm} and references therein] for more generalized models by considering non constant viscosity, general density, thermodynamically consistent model, compressible fluids, moving contact lines, etc.

In this work, our aim is to study the optimal boundary control related to the system \eqref{equ P2}. Let us now briefly mention some related literature on optimal control problems. Optimal control problems for Cahn-Hilliard equations have been studied by several mathematicians in [\cite{CH_dynamicbdry, CH_viscous, CS, HW, ZL}, and the references therein] for both the cases of distributed and boundary control. The authors have considered a distributed optimal control problem for Navier-Stokes equations in \cite{Manservisi_NSdcontrol, Manservisi_dcontrol, sri1, sri2}, to name a few. Whereas in \cite{Manservisi_boundarycontrol, Fursikov_bndry, Fursikov_3dbdary, Fursikov_3dbcontl}, the authors have studied boundary optimal control problem for the Navier-Stokes equation. 
In \cite{Fursikov_bndry}, the 2D Navier-Stokes equation on the unbounded domain is considered with control acts on the boundary. In the seminal paper by the same authors \cite{Fursikov_3dbcontl},  the optimal boundary control for the Navier-Stokes system in 3D has been studied. Turning to optimal control problems for the CHNS system, there are a few analytical works available in the literature that we would like to mention. The robust control problem for the local CHNS system is investigated in \cite{robust_control}, and the optimal control with state constraints is considered in \cite{medjo2}. For the nonlocal CHNS system, we mention \cite{tdm, tdm1, chns_regular, fgj} for regular potential and singular potential, respectively. In \cite{tdm}, the authors have investigated an enstrophy minimization problem and also a data assimilation type of problem where control acts as initial data. In all the other works mentioned above, a distributed optimal control problem has been studied in terms of minimizing a standard tracking type cost functional. 
   Regarding the numerical studies, optimal control problems of semi-discrete CHNS system for various cases like distributed and boundary control, with non-smooth Landau-Ginzburg energies and with non-matched fluid densities are studied in \cite{Hm1, HIm2, Hm3}. These works considered the local Cahn–Hilliard–Navier–Stokes equations for their numerical studies. 


 In the context of boundary control, a more relevant problem from the application point of view is when the control $\h$ acts locally on a part of the boundary instead of being applied to the whole boundary. Hence we consider $\h$  such that it has support in open subset $\Gamma'\subset \Gamma$ i.e., $\text{supp}(\h) \subset \Gamma'$.
  To use the well-posedness results stated in \cite{MTS}, one needs to extend $\h $ to the whole of $\Gamma$ such that the extension remains in the appropriate Hilbert space defined in \cite{MTS}.  The standard extension of $\h$ by zero outside $ \Gamma'$  is denoted by $\tilde \h $. However this extension $\tilde \h $ does not preserve the requisite regularity criteria that we need for the control space. We therefore borrow ideas from [\cite{raymond_feedback}, see section 2] and define an operator $M$ such that the extension $M\tilde\h$ satisfies the regularity assumption. We can then replace the condition $\text{supp}(\h) \subset \Gamma'$ by defining $\u = M\tilde\h = M\h$ on $\Sigma$ in \eqref{equ P2}. With the required well-posedness result of \eqref{equ P2} [see \cite{MTS} for details], the optimal control problems associated with the system \eqref{equ P2} can be studied. We restrict ourselves to dimension 2 as the higher regularity of the solution necessary to study the optimal control problem is available only in dimension 2.

\textbf{Problem description:}
In view of above motivation regarding well-posedness of the system we rewrite the system  \eqref{equ P2} and describe the corresponding control problem under consideration as follows:

Let us consider the quadratic cost functional
\begin{align}
    \mathcal{J}(\u, \varphi, \h) = \frac{1}{2}\int_{0}^{T}\|\u - \u_{Q}\|^2 + &\frac{1}{2}\int
    _{0}^{T}\|\varphi - \varphi_{Q}\|^2 + \frac{1}{2}\|\u(T) - \u_{\Omega}\|^2_{\L^2(\Omega)} \no\\ & + \frac{1}{2}\|\varphi(T) - \varphi_{\Omega}\|_{\mathrm{L}^2(\Omega)}^2 + \frac{1}{2}\int_{0}^T\|\h\|_{\L^2(\Gamma')}^2.\label{cost functional}
\end{align}
Where $\u_Q, \, \varphi_Q, \, \u_{\Omega}, \, \varphi_{\Omega}$ are the target functions such that $\u_Q \in \mathrm{L}^2(0, T; \mathbb{L}^2_{div}(\Omega)), \,$ $ \varphi_Q \in \mathrm{L}^2(Q), \, \u_{\Omega} \in \mathbb{L}^2_{div}(\Omega), \, \varphi_{\Omega} \in \mathrm{L}^2(\Omega)$. 
The corresponding optimal control problem can be defined as 
\begin{align*}
   (\textbf{OCP}) \qquad \min_{\h \in \mathcal{U}_{ad}} \mathcal{J}(\u, \varphi, \h),
\end{align*}
subject to
\begin{equation}\label{equ P}
\left\{
\begin{aligned}
  \varphi_t + \mathbf{u} \cdot \nabla \varphi &= \Delta \mu, \, \, \text{ in } Q, \\
        \mu &= -\Delta\varphi + F'(\varphi), \\
        \mathbf{u}_t - \nu \Delta\mathbf{u} + (\mathbf{u}\cdot \nabla)\mathbf{u} + \nabla \pi &= \mu \nabla \varphi, \, \, \text{ in } Q, \\
        div~\mathbf{u} & = 0, \, \, \text{ in } Q, \\
        \frac{\partial\varphi}{\partial\n} = 0, \,  \frac{\partial\mu}{\partial\mathbf{n}} & = 0, \,\, \text{ on } \Sigma, \\
        \u  & = M\tilde\h \, \, \text{ on } \Sigma, \\
        \mathbf{u}(0) = \mathbf{u}_0 ,\,\, \varphi(0) & = \varphi_0, \,\, \text{ in } \Omega.  
\end{aligned}   
\right.
\end{equation}

The motivation of the $(\mathbf{OCP})$ problem  is  to find the best control $\h$ from the set of admissible controls such that the corresponding optimal solution of \eqref{equ P} is as close as possible to the target state. The last term in \eqref{cost functional} is the effort by the control that we have to pay in order to reach the final state. 

The main results of this paper are summarized as follows:
\begin{enumerate}
    \item We establish the existence of an optimal boundary control for the problem $(\mathbf{OCP})$ where the control is taken as a localised control. [See Theorem \eqref{Existence of an optimal control}]
    \item We show that the control to state operator $\mathcal{S}$ is Fr\'echet differentiable between suitable Banach spaces. [See Theorem \eqref{diffrentiability}]
    \item We derive the adjoint system corresponding to state problem \eqref{equ P} and establish the well-posedness of the adjoint system. [See Theorem \eqref{existence adj equ}]
    \item Finally, we derive the first-order necessary optimality condition in terms of a variational inequality involving adjoint states. [See Theorem \eqref{1st order necessary optimality condition}]
\end{enumerate}

The plan of the paper is as follows: in the next section, we give some preliminaries about function spaces and operators used in the sequel. We also recall the well-posedness results required in this work and give a brief sketch of proof that the strong solution depends continuously on the boundary data. This result is crucial in proving the existence of optimal control which is tackled in section 3.
Section 4 is devoted to studying the linearized system, which comes naturally when one wants to establish the differentiability of control to state operator.
Finally, in section 5, we characterise the optimal control with the help of an adjoint system and establish the well-posedness of the adjoint system.

\section{Preliminary}
\subsection{Functional Setup} 
Let $\Omega$ be a bounded subset of $\mathbb{R}^2$ with sufficiently smooth boundary $\partial \Omega$. We introduce the functional spaces that will be useful in the paper. 
\begin{align*}
&\G_{\text{div}} := \Big\{ \u \in \mathrm{L}^2(\Omega;\R^2) : \text{ div }\u=0,\  \u\cdot \mathbf{n}\big|_{\Gamma}=0 \Big\}, \\
&\V_{\text{div}} :=\Big\{\u \in \mathrm{H}^1_0(\Omega;\R^2): \text{ div }\u=0\Big\},\\ 
&\mathbb{L}^2_{\text{div}} :=\Big\{\u \in \mathrm{L}^2(\Omega;\R^2) : \text{ div }\u=0 \Big\},\\
&\H^s_{\text{div}} :=\Big\{\u \in \mathrm{H}^s(\Omega;\R^2): \text{ div }\u=0\Big\}, \quad s > 0,\\ 
&\mathrm{L}^2:=\mathrm{L}^2(\Omega;\mathbb{R}),\\ 
&\mathrm{H}^s:=\mathrm{H}^s(\Omega;\mathbb{R}),
\end{align*}
where for $0< s <1$, we define the fractional Hilbert space in the usual way:
\begin{align*}
    \H^s = \big\{\u\in \mathbb{L}^2(\Omega) : \frac{|\u(\x) -\u(\y)|}{|\x-\y|^{\frac{n}{2}+s}}\in \mathbb{L}^2(\Omega\times\Omega)\big\}
\end{align*}
with the natural norm, $ \|\u\|_{\H^s} = \|\u\|_{\mathbb{L}^2} + [\u]_{s,2},$
where the seminorm is defined as
\begin{align*}
    [\u]_{s,2} = \Big(\int_{\Omega}\int_{\Omega}\frac{|\u(\x)-\u(\y)|^2}{|\x-\y|^{n+2s}} \, dxdy\Big)^\frac{1}{2}.
\end{align*}
If $s\geq 1$ we write $s= l+ \sigma, \, 0<\sigma<1$ and $l$ is integer, then we write $\|\u\|_{\H^s}= \|\u\|_{\H^l} + \|\u\|_{\H^\sigma}.$ 
Also, we define boundary spaces 
\begin{align*}
  \V^s(\Gamma) := \Big\{\h\in\H^s(\Gamma) : \int_{\Gamma} \h\cdot\n = 0 \Big\}, \quad s\geq 0. 
\end{align*}
 With usual convention, the dual space of $\mathrm{H}^s(\Omega)$ is denoted by $\mathrm{H}^{-s}(\Omega)$.  Let us denote $\| \cdot \|$ and $(\cdot, \cdot)$ the norm and the scalar product, respectively, on $\mathbb{L}^2_{\text{div}}$ and $\G_{\text{div}}$. The duality between any Hilbert space $\X$ and its dual $\X'$ will be denoted by $\left<\cdot,\cdot\right>$. We know that $\V_{\text{div}}$ is endowed with the scalar product 
$$(\u,\v)_{\V_{\text{div}} }= (\nabla \u, \nabla \v)=2(\mathrm{D}\u,\mathrm{D}\v),\ \text{ for all }\ \u,\v\in\V_{\text{div}}.$$ The norm on $\V_{\text{div}}$ is given by $\|\u\|_{\V_{\text{div}}}^2:=\int_{\Omega}|\nabla\u(x)|^2\d x=\|\nabla\u\|^2$. Since $\Omega$ is bounded, the embedding of $\mathbb{V}_{\text{div}}\subset\G_{\text{div}}\equiv\G_{\text{div}}'\subset\V_{\text{div}}'$ is compact. 
\subsection{\bf Linear and Nonlinear Operators}
    Let us define the Stokes operator $\mathbf{A} : \D(\mathbf{A})\cap  \G_{\text{div}} \to \G_{\text{div}}$ by 
$$\mathbf{A}=-\mathrm{P} \Delta,\ \D(\mathbf{A})=\mathbb{H}^2(\Omega) \cap \V_{\text{div}},$$ where $\mathrm{P} : \mathbb{L}^2(\Omega) \to \G_{\text{div}}$ is the \emph{Helmholtz-Hodge orthogonal projection}. Note also that we have
\begin{equation*}
\langle\mathbf{A}\u, \v\rangle = (\u, \v)_{\V_\text{div}} = (\nabla\u, \nabla\v),  \text{ for all } \u \in\D(\mathbf{A}), \v \in \V_{\text{div}}.
\end{equation*}
Let $\u, \, \v$ be two vector-valued functions. Then
\begin{equation*}
    \nabla(\u \cdot \v) = (\nabla\u)\cdot\v + (\nabla\v)\cdot\u = \v^T(\nabla\u) + \u^T(\nabla\v).
\end{equation*}
\\
Let $m$ be a function in $C^2(\Gamma)$ such that $0\leq m(x) \leq 1, \, \forall x \in \Gamma$. $m$ has support in $\Gamma'$ and takes value $1$ in $\Gamma''$, which is open subset of $\Gamma'$. Associated to this function $m$ we introduce the operator $M \in \mathcal{L}(\V^0(\Gamma))$ by
\begin{align*}
    M\h(\x) = m(\x)\h(\x) - \frac{m}{\int_{\Gamma} m}\Big(\int_\Gamma
    m\h\cdot\n\Big)\n(\x).
\end{align*}
We want to show that if $\h\in \mathrm{L}^\infty(0, T; \V^{\frac{3}{2}}(\Gamma'))\cap \mathrm{L}^2(0, T; \V^{\frac{5}{2}}(\Gamma'))\cap \mathrm{H}^1(0, T; \V^{\frac{1}{2}}(\Gamma'))$ then $M\tilde\h \in \mathrm{L}^\infty(0, T; \V^{\frac{3}{2}}(\Gamma))\cap \mathrm{L}^2(0, T; \V^{\frac{5}{2}}(\Gamma))\cap \mathrm{H}^1(0, T; \V^{\frac{1}{2}}(\Gamma)) $, where $\tilde{\h}$ is the extension of $\h$ by $0$ outside $\Gamma'$. Indeed 
\begin{align*}
    [M\tilde\h(t)]_{\frac{1}{2}, 2} &:=\Big(\int_{\Gamma}\int_{\Gamma}\frac{|M\tilde\h(\x, t)-M\tilde\h(\y,t)|^2}{|\x-\y|^3}\Big)^{\frac{1}{2}}\\
    & \leq \Big(\int_{\Gamma'}\int_{\Gamma'}\frac{|m(\x)\h(\x,t)-m(\y)\h(\y, t)|^2}{|\x-\y|^3}\Big)^\frac{1}{2} + C\Big(\int_{\Gamma'}\int_{\Gamma'}\frac{|\n(\x)-\n(\y)|^2}{|\x-\y|^3}\Big)^\frac{1}{2}\\
    &\leq \Big(\int_{\Gamma'}\int_{\Gamma'}\frac{|(m(\x)\h(\x,t)-m(\x)\h(\y, t))-(m(\y)-m(\x))\h(\y, t)|^2}{|\x-\y|^3}\Big)^\frac{1}{2}+ C[\n]_{\frac{1}{2}, 2}\\
    & \leq \sup_{\x\in \Gamma'}|m(\x)|[\h]_{\frac{1}{2}, 2} + 2\sup_{\x\in \Gamma'}|m(\x)| \Big(\int_{\Gamma'}\int_{\Gamma'\cap \{|\x-\y|\geq 1\}}\frac{|\h(\y)|^2}{|\x-\y|} +\\& \int_{\Gamma'}\int_{\Gamma'\cap \{|\x-\y| < 1\}}\frac{|\h(\y)|^2}{|\x-\y|} \Big)^\frac{1}{2} + C\\
    & \leq C([\h]_{\frac{1}{2}, 2} + \|\h\|_{\mathbb{L}^2(\Gamma)} +C) < \infty.
\end{align*}

\begin{assumption}\label{prop of F}
We take the following assumptions on $F$:
	\begin{enumerate}
	\item [(1)] There exist $C_1 >0$, $C_2 > 0$  such that $\F''(s) \leq C_1|s|^{p-1} + C_2$, for all $s \in \mathbb{R}$, $1 \leq p < \infty$ and a.e., $x \in \Omega$.
	\item [(2)] $\F \in \C^{2}(\mathbb{R})$ and there exists $C_3 >0$ such that $\F''(s)\geq -C_3$, for all $s \in \mathbb{R}$, a.e., $x \in \Omega$.
	\item [(3)] There exist $C_3' >0$, $C_4 \geq 0$ and $r \in (1,2]$ such that $|\F'(s)|^r \leq C_3'|\F(s)| + C_4,$ for all $s \in \mathbb{R}$.
		\item[(4)] $\F \in \C^{3}(\mathbb{R})$ and there exists $C_5 > 0$, $|\F'''(s)| \leq C_5(1 + |s|^{q})$ for all $s \in \mathbb{R} \text{ where } q < +\infty$.
		\item[(5)] $F(\varphi_0) \in L^1(\Omega)$.
	\end{enumerate}
\end{assumption}
The well-posedness of the system \eqref{equ P} is proved in \cite{MTS}. As we need strong solution of \eqref{equ P} to study the control problem, we state the strong solution existence result from \cite{MTS}.
\subsection{Well-Posedness of Stokes Equations:}
Let us consider the following Stokes' problem
\begin{equation}\label{equ1}
\left\{
 \begin{aligned}
     -\nu\Delta \u_e + \nabla\pi & = 0, \,\, \text{ in }Q, \\
    div~\u_e & = 0, \, \, \text{ in }Q,\\
    \u_e & = M\tilde\h, \, \, \text{ on } \Sigma.
     \end{aligned}   
     \right.
\end{equation}
\begin{theorem} \label{elliptic lifting}
Suppose that $\h$ satisfies the conditions
    \begin{equation}\label{e1}
\left\{
\begin{aligned}
   \h  &\in \mathrm{L}^2(0, T; \V^\frac{5}{2}(\Gamma')) \cap \mathrm{L}^\infty(0, T; \V^{\frac{3}{2}}(\Gamma')), \\
   \partial_t\h &\in \mathrm{L}^2(0, T; \V^{\frac{1}{2}}(\Gamma'))
\end{aligned}   
\right.
\end{equation}
then  Stokes' equations \eqref{equ1} admits a strong solution
\begin{align}\label{e18}
  \u_e \in \mathrm{H}^1(0, T; \H^1_{div})  \cap \L^2(0, T; \H^3_{div}),
\end{align}
such that
\begin{align*}
    \int_{0}^{T} {\| \u_e(t) \|}^2_{\H^3_{div}} \, dt \leq C \int_{0}^{T} {\| \h(t) \|}^2_{\V^\frac{5}{2}(\Gamma')} \, dt, \\
    \int_{0}^{T} {\| \partial_t\u_e(t) \|}^2_{\H^1_{div}} \, dt \leq C \int_{0}^{T} \| \partial_t \h(t) \|_{\V^{\frac{1}{2}}(\Gamma')} \, dt.
\end{align*}
    \end{theorem}
\begin{proof}
    The proof of the above theorem can be found in \cite{Lions_book, raymond_stokes}.
\end{proof}
 \begin{theorem}[\cite{MTS}, Theorem 6.1]\label{strong solution}
    Let $F$ satisfy assumption \eqref{prop of F} and $\h$ satisfy condition \eqref{e1}. Also assume for initial data $(\u_0, \varphi_0)\in\H^1_{div} \times \mathrm{H^2}$ with $\u_0$ satisfying the compatibility condition
    \begin{align}\label{compatibility}
       M\tilde\h|_{t=0} = \u_0|_{\Gamma} 
    \end{align}
    Then there exists a unique pair $(\u, \varphi)$ which is a weak solution of the system \eqref{equ P} and  satisfies
\begin{align*}
    \u & \in \L^\infty(0, T; \H^1_{div}) \cap \L^2(0, T; \H^2_{div})\\
    \varphi & \in \mathrm{L}^\infty(0, T; \mathrm{H}^2) \cap \mathrm{L}^2(0, T; \mathrm{H}^4).
\end{align*}
 \end{theorem}
\begin{remark}     
One can easily estimate the time derivatives $\u_t, \, \varphi_t$. Thus we have
\begin{align*}
    \u_t \in \mathrm{L}^2(0, T; \mathbb{L}^2_{div}(\Omega)), \quad
    \varphi_t \in \mathrm{L}^2(0, T; \mathrm{L}^2(\Omega)).
\end{align*}
    \end{remark}

In the next section, we will discuss the continuous dependence of a strong solution. We will give a brief sketch of proof as it will follow similarly from \cite{MTS}, Section 5 with a slight modification in estimates.
\subsection{Continuous dependence of strong solution}
Let $(\u_1, \varphi_1)$ and $(\u_2, \varphi_2)$ be two weak solutions of the system \eqref{equ P} with non-homogeneous boundaries $M\tilde{\h_1}$ and $M\tilde{\h_2}$ and initial conditions $\u_{i0}$, $\varphi_{i0}$, for $i=1,2$ respectively. Then denote the differences $\u = \overline{\u}_1 - \overline{\u}_2, \, \varphi = \varphi_1 - \varphi_2$ where $\overline{\u}_i=\u_i - \u_{{e_i}}$, $\u_{e_i}$ is the solution of \eqref{equ1} corresponding to boundary $M\tilde{\h_i}$, for $i=1,2$.  Note that, $\u_e := \u_{e_1} - \u_{e_2}$ satisfies the equation \eqref{equ1} with the boundary $M\tilde\h = M\tilde{\h_1} - M\tilde{\h_2}$. Then $(\u, \varphi)$ satisfies:
\begin{equation}\label{equ P1}
\left\{
\begin{aligned}
& \varphi_t + \u \cdot \nabla \varphi_1 + \mathbf{u}_2 \cdot \nabla \varphi + \u_e \cdot \nabla \varphi_1 + \u_{e_2} \cdot \nabla \varphi  = \Delta \Tilde{\mu}, \, \, \text{ in } \Omega \times (0,T),  \\
    & \Tilde{\mu} = -\Delta\varphi + F'(\varphi_2) - F'(\varphi_1), \, \, \text{ in } \Omega \times (0,T), \\
    & \u_t - \nu \Delta\u + (\u\cdot \nabla)\u_{e_1} + (\u_{e_1} \cdot \nabla)\u + (\u\cdot \nabla)\u_1 + (\u_2 \cdot \nabla) \u_e + (\u_e \cdot \nabla)\u_2   \\ &\qquad + (\u_e \cdot \nabla)\u_{e_1} + (\u_{e_2} \cdot \nabla)\u_e + \nabla \Tilde{\pi} = \Tilde{\mu} \nabla \varphi_1 + \mu_2 \nabla \varphi- \partial_t\u_e, \, \, \text{ in } \Omega \times (0,T), \\
   & div~\mathbf{u}  = 0, \, \, \text{ in } \Omega \times (0,T), \\
    &    \frac{\partial\varphi}{\partial\n} = 0, \,  \frac{\partial\Tilde{\mu}}{\partial\mathbf{n}}  = 0, \,\, \text{ on } \Sigma,\\
     &   \u  = 0, \,\, \text{ on } \Sigma,  \\
     & \u(x,0)= \u_0, \, \varphi(x,0)=\varphi_0, \, \text{in } \Omega.
\end{aligned}
\right.
\end{equation}
We have the following continuous dependence result for the strong solution of \eqref{equ P}:
\begin{proposition}\label{cds}
    Let $\h_i \in \mathcal{U} \text{ and } (\varphi_i, \u_i)$ be the strong solution of the system \eqref{equ P} with corresponding boundary data $\h_i$  with initial data $(\varphi_{i0}, \u_{i0})$ for $i = 1, 2.$ Let $\h = \h_1 - \h_2$. Then there exist a constant $C>0$ such that:
    \begin{align}\label{strong dependence}
        \|\u\|_{\mathrm{L}^\infty(0, T; \H^1_{div}(\Omega))\cap\mathrm{L}^2(0, T; \H^2(\Omega))} + \|\varphi\|_{\mathrm{L}^\infty(0, T; \mathrm{H}^2(\Omega))\cap\mathrm{L}^2(0, T; \mathrm{H}^4(\Omega))} + \|\nabla\Tilde{\mu}\|_{\mathrm{L}^2(0, T; \mathrm{L}^2)}\leq C \|\h\|_{\mathcal{U}},
    \end{align}
    where the normed linear space $\mathcal{U}$ is as defined in \eqref{control space} with the norm, $\|\cdot\|_{\mathcal{U}}$, given by \eqref{norm h}.
\end{proposition}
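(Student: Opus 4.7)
My plan is to run the energy method on the difference system \eqref{equ P1} at the strong-solution regularity, paralleling the weak continuous dependence proof of Proposition \ref{cdw} and \cite{MTS} Section 5, but lifting every test function by one derivative. The ingredients are the strong bounds of Theorem \ref{strong sol} for the reference pairs $(\u_i,\varphi_i)$, the higher-order lifting estimates of Theorem \ref{elliptic lifting} applied to $\u_e=\u_{e_1}-\u_{e_2}$ under the stronger hypothesis \eqref{e1}, the weak-level baseline from Proposition \ref{cdw}, and the structural assumption \ref{prop of F} on $F$.

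\emph{Step 1 ($\mathrm{H}^1$-bound for $\u$).} I test the velocity equation in \eqref{equ P1} with the Stokes multiplier $\mathbf{A}\u$, obtaining
\begin{align*}
\tfrac{1}{2}\tfrac{d}{dt}\|\nabla\u\|^2+\nu\|\mathbf{A}\u\|^2 = (\tilde\mu\nabla\varphi_1+\mu_2\nabla\varphi-\partial_t\u_e,\mathbf{A}\u)-\sum_{j}\bigl((\mathbf{w}_j\cdot\nabla)\mathbf{z}_j,\mathbf{A}\u\bigr),
\end{align*}
with $\mathbf{w}_j,\mathbf{z}_j$ ranging over $\{\u,\u_e,\u_{e_i},\u_i\}$ as read off from \eqref{equ P1}. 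In two dimensions I bound the convective triples using Ladyzhenskaya's inequality $\|\mathbf{w}\|_{L^4}^2\lesssim\|\mathbf{w}\|\|\nabla\mathbf{w}\|$ and Agmon's inequality $\|\mathbf{z}\|_{L^\infty}\lesssim\|\mathbf{z}\|_{\mathrm{H}^1}^{1/2}\|\mathbf{z}\|_{\mathrm{H}^2}^{1/2}$ together with the strong regularity of $\u_i$ and $\u_{e_i}$; a fraction of $\|\mathbf{A}\u\|^2$ is absorbed on the left, and the $\u_e$-source terms are controlled in $L^2_t$ by $\|\h\|_{\mathcal{U}}^2$ through \eqref{estimate h}--\eqref{estimate ht}.

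\emph{Step 2 ($\mathrm{H}^2$-bound for $\varphi$).} This proceeds in two sub-steps. First, testing the $\varphi$-equation of \eqref{equ P1} with $\tilde\mu$ yields $\tfrac{1}{2}\tfrac{d}{dt}\|\nabla\varphi\|^2+\|\nabla\tilde\mu\|^2=\cdots$, where the potential pairing $(\varphi_t,F'(\varphi_2)-F'(\varphi_1))$ is dominated via the mean-value bound $|F'(\varphi_1)-F'(\varphi_2)|\le|F''(\xi)|\,|\varphi|$ combined with assumption \ref{prop of F}(1) and the 2D embedding $\mathrm{H}^2\hookrightarrow L^\infty$ applied to $\varphi_i$. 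Together with the elliptic identity $-\Delta\varphi=\tilde\mu-(F'(\varphi_1)-F'(\varphi_2))$, this delivers an $L^\infty(\mathrm{H}^1)\cap L^2(\mathrm{H}^2)$ bound on $\varphi$ that is linear in $\|\h\|_{\mathcal{U}}$. To upgrade to $L^\infty(\mathrm{H}^2)$ I test the $\varphi$-equation with $\Delta^2\varphi$; the Neumann conditions $\partial_n\varphi=\partial_n\tilde\mu=\partial_n\varphi_i=0$ force $\partial_n\Delta\varphi=0$, so every boundary integral vanishes and the leading contribution becomes $\tfrac{1}{2}\tfrac{d}{dt}\|\Delta\varphi\|^2+\|\nabla\Delta\varphi\|^2$. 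The potential contribution $\Delta(F'(\varphi_1)-F'(\varphi_2))$ expands into $F''(\varphi_i),\,F'''(\varphi_i)$ times derivatives of $\varphi_i$, controlled through assumptions \ref{prop of F}(1),(4) and the strong bound $\varphi_i\in L^\infty(\mathrm{H}^2)\cap L^2(\mathrm{H}^3\cap\mathrm{H}^4)$.

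\emph{Step 3 (closure) and main obstacle.} Summing the two differential inequalities leads to
\begin{align*}
\tfrac{d}{dt}\bigl(\|\nabla\u\|^2+\|\Delta\varphi\|^2\bigr)+\nu\|\mathbf{A}\u\|^2+\|\nabla\Delta\varphi\|^2\le K(t)\bigl(\|\nabla\u\|^2+\|\Delta\varphi\|^2\bigr)+G(t),
\end{align*}
where $K\in L^1(0,T)$ depends only on the strong-solution norms of $(\u_i,\varphi_i)$ and $G\in L^1(0,T)$ satisfies $\int_0^T G\,dt\le C\|\h\|_{\mathcal{U}}^2$ by Theorem \ref{elliptic lifting} and Proposition \ref{cdw}. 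Gronwall's lemma (with vanishing initial data for the difference, as the reference solutions share the same initial state) yields \eqref{strong dependence}. The main obstacle lies in the $L^\infty(\mathrm{H}^2)$ step for $\varphi$: the chain-rule expansion of $F'(\varphi_1)-F'(\varphi_2)$ at the Laplacian level produces cubic and quartic interactions of $\nabla\varphi_i$ and $\Delta\varphi_i$, and keeping every constant uniform requires the tandem use of \ref{prop of F}(1),(4) and the 2D embedding $\mathrm{H}^2\hookrightarrow L^\infty$. This is precisely where the "slight modification in estimates" alluded to before the proposition is concentrated.
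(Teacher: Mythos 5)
Your plan coincides with the paper's proof in all essentials: the paper likewise multiplies the velocity difference equation $\eqref{equ P1}_3$ by $\mathbf{A}\u$ and the phase equation $\eqref{equ P1}_1$ by $\Delta^2\varphi$, estimates the convective, lifting, and potential terms via H\"older, Sobolev/Agmon, and Young using the strong norms of $(\u_i,\varphi_i)$ and $\u_{e_i}$ from Theorems \ref{strong sol} and \ref{elliptic lifting} (with the weak-level bound of Proposition \ref{cdw} absorbed into the coefficients, just as you propose), and closes by adding the two inequalities and applying Gronwall exactly as in your Step 3. One small correction: testing with $\Delta^2\varphi$ yields the dissipation $\|\Delta^2\varphi\|^2$ (as in the paper's \eqref{cont phi}), not $\|\nabla\Delta\varphi\|^2$ as you wrote — a harmless slip, and in fact the stronger term is what lets you absorb the $\bigl(\Delta(F'(\varphi_2})-F'(\varphi_1)),\Delta^2\varphi\bigr)$ contribution by Young's inequality.
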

\begin{proof}
We multiply the equation $\eqref{equ P1}_3$ by $\mathbf{A}\u$ and integrating we get the following    
\begin{align}\label{cont u}
  &\frac{1}{2}\frac{d}{dt} \|\nabla\u\|^2 + \nu \|\mathbf{A}\u\|^2 = (\, (\u\cdot \nabla)\u_{e_1}, \mathbf{A}\u \, )  + (\, (\u_{e_1} \cdot \nabla)\u, \mathbf{A} \u \, ) + ( \, (\u\cdot \nabla)\u_1, \mathbf{A} \u \, ) \no \\ & \qquad + ( \, (\u_2 \cdot \nabla) \u_e, \mathbf{A} \u \, ) + ( \, (\u_e \cdot \nabla)\u_2, \mathbf{A} \u \, ) + ( \, (\u_e \cdot \nabla)\u_{e_1}, \mathbf{A} \u \, ) + ( \, (\u_{e_2} \cdot \nabla)\u_e, \mathbf{A} \u \, ) \no \\ & \qquad  + ( \, \Tilde{\mu} \nabla \varphi_1, \mathbf{A} \u \, ) + ( \, \mu_2 \nabla \varphi, \mathbf{A} \u \, ) + ( \, \partial_t\u_e, \mathbf{A} \u \, )
  \end{align}
  Similarly multiplying by $\Delta^2\varphi$ to the equation $\eqref{equ P1}_1$  and integrating we obtain
\begin{align}\label{cont phi}
   \frac{1}{2}\frac{d}{dt} \|\Delta \varphi\|^2 + \|\Delta^2\varphi\|^2  = & -(\u \cdot \nabla \varphi_1, \Delta^2\varphi) - (\u_2 \cdot \nabla \varphi, \Delta^2\varphi) - (\u_e \cdot \nabla \varphi_1, \Delta^2\varphi) \no\\ & - (\u_{e_2} \cdot \nabla \varphi, \Delta^2\varphi) + (\Delta \Tilde{\mu}, \Delta^2\varphi) 
\end{align}
Again to get the $\Tilde{\mu}$ estimate we multiply the equation $\eqref{equ P1}_1$ by $\Tilde{\mu}$ and integrate to obtain
\begin{align}\label{mu}
     \frac{1}{2}\frac{d}{dt}\|\nabla\varphi\|^2 + C_3 \frac{d}{dt}\|\varphi\|^2  + \|\nabla \Tilde{\mu}\|^2 = &-(\u \cdot \nabla \varphi_1, \Tilde{\mu}) - (\u_2 \cdot \nabla\varphi, \Tilde{\mu}) - (\u_e \cdot \nabla \varphi_1, \Tilde{\mu})\no \\ &-(\u_{e_2} \cdot \nabla \varphi,\Tilde{\mu}),
     \end{align}
Now estimating each term of the right-hand side of \eqref{cont u} using Sobolev inequality, H{\"o}lder inequality, Young's inequality and keeping in mind the regularity of $(\varphi_i,\u_i) \text{ and } \u_{ei}$ we obtain
\begin{align}\label{cd u}
     \frac{1}{2}\frac{d}{dt} \|\nabla\u\|^2 + \nu \|\mathbf{A}\u\|^2  \leq & \frac{\nu}{2}\|\mathbf{A}\u\|^2 + C \Big[2\|\nabla\u_2\|^2 + \|\h_1\|^2_{\V^{\frac{3}{2}}(\Gamma')} + \|\h_2\|^2_{\V^{\frac{3}{2}}(\Gamma')} + \|\varphi_1\|^2_{\mathrm{H}^3} + \frac{5}{\nu}\|\mu_2\|^2_{\mathrm{H}^2} \Big]\|\h\|^2_{\mathcal{U}} \no\\
      & + C\big[\|\h_1\|^2_{\V^{\frac{3}{2}}(\Gamma')} + \|\u_1\|^2_{\H^2_{div}}\big]\|\nabla\u\|^2.
\end{align}
Similarly, adding \eqref{cont phi} and \eqref{mu} and estimating right-hand side we get 
\begin{align}\label{cd phi}
     \frac{1}{2}\frac{d}{dt} \|\varphi\|_{\mathrm{H}^2}^2 + \|\Delta^2\varphi\|^2 + \|\nabla\Tilde{\mu}\|^2 & \leq  \frac{1}{2} \|\Delta^2\varphi\|^2 + C \Big[3\|u_2\|^2_{\H^2} + \|\nabla\varphi_1\|^2 + 3\|\h_2\|^2_{\V^{\frac{3}{2}}(\Gamma')}\Big]\|\h\|^2_{\mathcal{U}} \no\\
     & +\Big(\frac{\nu}{4} + C\|\varphi_1\|^4_{\mathrm{H}^1}\Big)\|\nabla\u\|^2 + C (\|\u_2\|^2_{\mathbb{L}^4_{div}}+ \|\u_{e_2}\|^2_{\mathbb{L}^2_{div}} + \|\h\|_{\V^\frac{3}{2}(\Gamma')}+1) \|\varphi\|_{\mathrm{H}^2}^2. 
\end{align}
Adding the inequalities \eqref{cd u}, \eqref{cd phi} then integrating the resulting inequality between $0 \text{ to } t$ and applying Gronwall's lemma we get 
\begin{align}
   \sup_{t \in [0, T)} \big(\|\nabla\u(t)\|^2 + \|\varphi(t)\|_{\mathrm{H}^2}^2\big) + \int_{0}^t (\|\mathbf{A}\u(t)\|^2 + \|\Delta^2\varphi(t)\|^2 + \|\nabla\Tilde{\mu}(t)\|^2 dt \leq C\|\h\|^2_{\mathcal{U}}
\end{align}
 for all $t \in [0, T)$, where $C$ is a positive constant depends on $\|(\varphi_i, \u_i)\|_{\mathcal{V}}, \|\h_i\|_{\mathcal{U}}, \|\varphi_{i0}\|_{\mathrm{H}^2}, \|\u_{i0}\|_{\H^1_{div}}$, $i = 1, 2$, Which immediately gives \eqref{strong dependence}.

\end{proof}
\section{Optimal Control}
We introduce the space
\begin{align}\label{control space}
    \mathcal{U} := \{ \h(\x, t) :\h \in \mathrm{L}^\infty(0, T; \V^{\frac{3}{2}}(\Gamma'))\cap \mathrm{L}^2(0, T; \V^\frac{5}{2}(\Gamma')), \, \partial_t\h \in \mathrm{L}^2(0, T; \V^{\frac{1}{2}}(\Gamma')\}, 
\end{align} 
with the norm given by 
\begin{align}\label{norm h}
    \|\U\|_{\mathcal{U}} = \|\U\|_{\mathrm{L}^\infty(0, T; \V^{\frac{3}{2}}(\Gamma'))} + \| \U \|_{\mathrm{L}^2(0, T; \V^\frac{5}{2}(\Gamma'))} + \|\partial_t\U\|_{\mathrm{L}^2(0, T; \V^{\frac{1}{2}}(\Gamma'))}, \text{ for any } \U \in \mathcal{U}.
\end{align}
\begin{remark}
    If $\Gamma$ is sufficiently smooth boundary of $\Omega$ then from \cite{Taylor} we have the following embedding
    \begin{align}
        \mathrm{L}^\infty(0, T; \V^{\frac{3}{2}}(\Gamma))\cap \mathrm{L}^2(0, T; \V^\frac{5}{2}(\Gamma)) \hookrightarrow C([0, T]; \V^{\frac{1}{2} + \delta}) \hookrightarrow C([0, T] \times \Gamma),
        \end{align}
        for any $0<\delta \leq 1.$ Therefore the compatibility condition $M\tilde\h(\x, 0) = \u_0(\x)|_{\Gamma}$ is well-defined.
\end{remark}
For a sufficiently large positive constant $L$, we define the set of admissible boundary control space as follows:
\begin{align}\label{ad control space}
    \mathcal{U}_{ad} := \{\h \in \mathcal{U} \, | \, \|\h\|_{\mathrm{L}^\infty(0, T; \V^{\frac{3}{2}}(\Gamma'))\cap \mathrm{L}^2(0, T; \V^\frac{5}{2}(\Gamma'))} \leq L, \, \|\partial_t\h\|_{\mathrm{L}^2(0, T; \V^{\frac{1}{2}}(\Gamma')} \leq L \}.
\end{align}
Let us define
\begin{align}
\mathcal{W} := & \big[C([0, T]; \L^2_{div}(\Omega))\cap \mathrm{L}^2(0, T; \H^1_{div}(\Omega))\big] \no \\ & \times \big[C([0, T], \mathrm{H}^1(\Omega)) \cap \mathrm{L}^2(0, T; \mathrm{H}^2(\Omega))\big],\\
    \mathcal{V} :=& \big[C([0, T]; \H^1_{div}(\Omega))\cap \mathrm{L}^2(0, T; \H^2_{div}(\Omega))\cap\mathrm{H}^1(0, T; \L^2_{div}(\Omega))\big] \no \\ & \times \big[C([0, T], \mathrm{H}^2(\Omega)) \cap \mathrm{L}^2(0, T; \mathrm{H}^4(\Omega))\cap \mathrm{H}^1(0, T; \mathrm{H}^1(\Omega))\big],
\end{align}
which denote the space of  global weak and strong solution of the system \eqref{equ P} respectively.\\
Now let us define control to state operator 
\begin{align*}
    \mathcal{S} : \mathcal{U} \rightarrow \mathcal{V} \, \text{ by } \, \h \rightarrow (\u, \varphi),
\end{align*}
where $(\u, \varphi)$ is the strong solution of \eqref{equ P} corresponding to boundary control $\h$ and with initial data $(\u_0, \varphi_0) \in \H^1_{div} \times \mathrm{H}^2.$

Therefore we can now define a reformulated  cost functional
\begin{align}\label{reformulated}
    \Tilde{\mathcal{J}} : \mathcal{U} \rightarrow [0, \infty) \, \text{ by } \, \Tilde{\mathcal{J}}(\h) := \mathcal{J}(\u, \varphi, \h)= \mathcal{J}(\mathcal{S}(\h), \h).
\end{align}
In the following theorem, we prove that the optimal control problem $(\mathbf{OCP})$ has a solution which exhibits the existence of an optimal control.
\begin{theorem}{(Existence of an optimal control)}\label{Existence of an optimal control}
    Let the initial data $(\u_0, \varphi_0) \in \H^1_{div} \times \mathrm{H}^2$ with $\u_0$ satisfying the compatibility condition \eqref{compatibility}, and $F$ satisfy the Assumption \ref{prop of F}. Let the target functionals $\u_Q \in \mathrm{L}^2(0, T; \mathbb{L}^2_{div}(\Omega)), \, \varphi_Q \in \mathrm{L}^2(\Omega \times (0, T)), \, \u_{\Omega} \in \mathbb{L}^2_{div}(\Omega), \, \varphi_{\Omega} \in \mathrm{L}^2(\Omega)$. There exists a control $\h^{\ast} \in \mathcal{U}_{ad}$ such that
\begin{align*}
  \mathcal{J}(\u^\ast, \varphi^\ast, \h^\ast) = \min_{\h\in\mathcal{U}_{ad}}\mathcal{J}(\u, \varphi, \h),  
\end{align*}

where $(\u^{\ast}, \varphi^\ast)$ is the unique solution of the state problem \eqref{equ P} corresponding to boundary control $\h^\ast\in \mathcal{U}_{ad}$. 
\end{theorem}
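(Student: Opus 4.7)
The plan is to apply the direct method of the calculus of variations. Since $\mathcal{J} \geq 0$, the infimum $m := \inf_{\h \in \mathcal{U}_{ad}} \Tilde{\mathcal{J}}(\h)$ is finite; I would pick a minimizing sequence $\{\h_n\} \subset \mathcal{U}_{ad}$ and let $(\u_n, \varphi_n) := \mathcal{S}(\h_n)$ be the associated strong solutions provided by Theorem \ref{strong sol}. Because $\mathcal{U}_{ad}$ is bounded in each of the three factors entering $\|\cdot\|_{\mathcal{U}}$, the Banach–Alaoglu theorem yields, along a subsequence (not relabeled), a limit $\h^* \in \mathcal{U}$ with $\h_n \rightharpoonup \h^*$ weakly in $\mathrm{L}^2(0,T;\H^{5/2}(\partial\Omega))$, weakly-$\ast$ in $\mathrm{L}^\infty(0,T;\H^{3/2}(\partial\Omega))$, and $\partial_t\h_n \rightharpoonup \partial_t \h^*$ weakly in $\mathrm{L}^2(0,T;\H^{1/2}(\partial\Omega))$. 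Since $\mathcal{U}_{ad}$ is convex and its defining norm inequalities are weakly (resp.\ weakly-$\ast$) lower semicontinuous, $\h^* \in \mathcal{U}_{ad}$.

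Next, I would invoke the a priori bounds behind Theorem \ref{strong sol}, which are uniform for $\h_n$ ranging in the bounded set $\mathcal{U}_{ad}$, to conclude that $\{(\u_n, \varphi_n)\}$ is uniformly bounded in $\mathcal{V}$. Banach–Alaoglu together with the Aubin–Lions/Simon compactness lemma (using the compact embeddings $\H^2_{div} \hookrightarrow \H^1_{div} \hookrightarrow \mathbb{L}^2_{div}$ and $\mathrm{H}^4 \hookrightarrow \mathrm{H}^2 \hookrightarrow \mathrm{H}^1$ with the time-derivative bounds) furnish a further subsequence and a limit $(\u^*, \varphi^*) \in \mathcal{V}$ for which
\begin{align*}
\u_n &\to \u^* \text{ strongly in } \mathrm{L}^2(0,T;\H^1_{div}) \cap C([0,T];\mathbb{L}^2_{div}), \\
\varphi_n &\to \varphi^* \text{ strongly in } \mathrm{L}^2(0,T;\mathrm{H}^2) \cap C([0,T];\mathrm{H}^1),
\end{align*}
along with the expected weak-$\ast$ convergences in the higher-regularity Bochner spaces. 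For the lifting I would apply Theorem \ref{elliptic lifting} to $\h_n - \h^*$ to get convergence of $\u_{e_n}$ to the lifting of $\h^*$ in the relevant topologies. These convergences are enough to identify every nonlinear term in the weak form of \eqref{equ P}: the convective terms $(\u_n\cdot\nabla)\u_n$ and $\u_n\cdot\nabla\varphi_n$, the Korteweg term $\mu_n\nabla\varphi_n$, and $F'(\varphi_n)$ (via continuity of $F'$ and the growth condition in Assumption \ref{prop of F}). Passing to the limit and invoking uniqueness of the strong solution gives $(\u^*,\varphi^*) = \mathcal{S}(\h^*)$.

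Finally, $\mathcal{J}$ is a sum of squared Hilbert-space norms, hence convex and continuous, and therefore weakly (and weakly-$\ast$) lower semicontinuous. The uniform-in-time convergences above give $\u_n(T) \to \u^*(T)$ in $\mathbb{L}^2_{div}$ and $\varphi_n(T) \to \varphi^*(T)$ in $\mathrm{L}^2(\Omega)$, so the terminal tracking terms pass to the limit; the running cost terms and the control penalty are handled by weak lower semicontinuity. Thus
\begin{equation*}
m \,\leq\, \mathcal{J}(\u^*,\varphi^*,\h^*) \,\leq\, \liminf_{n\to\infty}\mathcal{J}(\u_n,\varphi_n,\h_n) \,=\, m,
\end{equation*}
so $\h^*$ is optimal. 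The main obstacle I expect is obtaining strong convergence sharp enough to pass to the limit in the Korteweg term $\mu_n\nabla\varphi_n$: since $\mu_n = -\Delta\varphi_n + F'(\varphi_n)$ involves second derivatives of $\varphi_n$, this forces full use of the strong-solution regularity (in particular $\varphi_n$ bounded in $\mathrm{L}^2(0,T;\mathrm{H}^4)$) and of Assumption \ref{prop of F} to control $F''$. Should the compactness route become awkward, the continuous-dependence estimate in Proposition \ref{cds} provides a direct backup: strong convergence of a subsequence of $\{\h_n\}$ in a weaker topology — obtained by compactly embedding the factors of $\mathcal{U}$ that constitute $\mathcal{U}_{ad}$ into weaker spaces — would promote itself to strong convergence of the states.
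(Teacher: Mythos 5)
Your proposal is correct and follows essentially the same route as the paper: a minimizing sequence in $\mathcal{U}_{ad}$, weak/weak-$\ast$ compactness and lower semicontinuity of norms to keep the limit control admissible, uniform $\mathcal{V}$-bounds on the states yielding weak and strong subsequential convergences, passage to the limit in the weak formulation to identify $(\u^*,\varphi^*)=\mathcal{S}(\h^*)$, and weak lower semicontinuity of $\mathcal{J}$ to conclude. If anything, you are more explicit than the paper on two points it glosses over --- the Aubin--Lions/Simon argument behind the strong convergences and the convergence of the terminal terms $\u_n(T)$, $\varphi_n(T)$ --- which strengthens rather than changes the argument.
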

\begin{proof}
Let $j = \inf_{\h \in \mathcal{U}_{ad}}\mathcal{J}(\u, \varphi, \h)$. As $\mathcal{J}(\u, \varphi, \h) \geq 0 \text { for all } \h \in \mathcal{U}_{ad}$ so $j \geq 0$. Then there exists a sequence $\h_n \in \mathcal{U}_{ad}$ such that
\begin{align*}
    \lim_{n \rightarrow \infty}\mathcal{J}(\u_n, \varphi_n, \h_n) = j
\end{align*}
where $\mathcal{S}(\h_n) = (\u_n, \varphi_n)$. Since $\{\h_n\}$ is bounded, there exist a subsequence still denoted by $\{\h_n\}$ such that 
\begin{align}
    \h_n & \rightarrow \h^\ast \, \text{ weakly in } \mathrm{L}^2(0, T; \V^\frac{5}{2}(\Gamma')),\label{weak control}\\
    \h_n & \rightarrow \h^\ast \, \text{ weak* in } \mathrm{L}^\infty(0, T; \V^{\frac{3}{2}}(\Gamma')),\label{weak* control}\\
    \partial_t\h_n & \rightarrow \partial_t\h^\ast \, \text{ weakly in } \mathrm{L}^2(0, T; \V^{\frac{1}{2}}(\Gamma'),\label{time weak control}
\end{align}
for some $\h^\ast \in \mathcal{U}$. Now we show that $\h^\ast \in \mathcal{U}_{ad}$. To do that we only need to show $\|\h^\ast\|_{\mathcal{U}} \leq L$. We know that if $\h_n \rightarrow \h^\ast$ weakly in $\| . \|$, then $\|\h^\ast\| \leq \liminf_{n\rightarrow\infty} \|\h_n\|$. Therefore from \eqref{weak control} and \eqref{time weak control} we have
\begin{align*}
    \|\h^\ast\|_{\mathrm{L}^2(0, T; \V^{\frac{5}{2}}(\Gamma'))} \leq L,\\
    \|\partial_t\h^\ast\|_{\mathrm{L}^2(0, T; \V^{\frac{1}{2}}(\Gamma')}) \leq L.
\end{align*}
Again, using the fact that any norm in a Banach space is weak* - lower semicontinuous,  we have 
\begin{align*}
    \|\h^\ast\|_{\mathrm{L}^\infty(0, T; \V^{\frac{3}{2}}(\Gamma'))} \leq L.
\end{align*}
Now from continuous dependence of strong solution, Proposition \ref{cds}, we conclude that there exists a subsequence $(\u_n, \varphi_n)$ such that
\begin{align*}
    \u_n &\rightarrow \u^\ast \, \text{ weak }^\ast \text{ in }  \mathrm{L}^\infty(0, T; \H^1_{\text{div}}),\\
    \u_n &\rightarrow \u^\ast \, \text{ weakly in } \mathrm{L}^2(0, T; \H^2_{\text{div}}) ,\\
    \u_n &\rightarrow \u^\ast \, \text{ strongly in } \mathrm{L}^2(0, T;\mathbb{L}^2_{\text{div}}),\\
    \varphi_n &\rightarrow \varphi^\ast \, \text{ weak }^\ast \text{ in } \,  \mathrm{L}^\infty(0, T; \mathrm{H}^2),\\
    \varphi_n &\rightarrow \varphi^\ast \, \text{ weakly in } \, \mathrm{L}^2(0, T; \mathrm{H}^4),\\
     \varphi_n &\rightarrow \varphi^\ast \, \text{ strongly in } \, \mathrm{L}^2(0, T; \mathrm{H}^1).
\end{align*}
Using the above convergence result on $(\u_n, \varphi_n)$, we can pass to the limit in the weak formulation of the system \eqref{equ P} and the limit $(\u^\ast, \varphi^\ast)$ satisfy the weak formulation of \eqref{equ P} with initial condition $(\u_0, \varphi_0) \in \H^1_{div} \times \mathrm{H}^2,$ and boundary condition $\u^\ast = \h^\ast$. Since $(\u^\ast, \varphi^\ast) \in \mathcal{V} \text{ and } \h^\ast \in \mathcal{U}$, therefore $\mathcal{S}(\h^\ast) = (\u^\ast, \varphi^\ast).$\\
Since $\mathcal{J}$ is weakly lower semi-continuous in $\mathcal{V}\times\mathcal{U}$, we have 
\begin{align*}
    \mathcal{J}(\u^\ast, \varphi^\ast, \h^\ast) \leq \liminf_{n \rightarrow \infty}\mathcal{J}(\u_n, \varphi_n, \h_n).
\end{align*}
We have from the above convergence 
\begin{align*}
    \liminf_{n \rightarrow \infty}\mathcal{J}(\u_n, \varphi_n, \h_n) = j
\end{align*}
Therefore, $j \leq \mathcal{J}(\u^\ast, \varphi^\ast, \h^\ast) \leq  \liminf_{n \rightarrow \infty}\mathcal{J}(\u_n, \varphi_n, \h_n) = j$\\
Hence we conclude that,
\begin{align*}
  \mathcal{J}(\u^\ast, \varphi^\ast, \h^\ast) = \min_{\h\in\mathcal{U}_{ad}}\mathcal{J}(\u, \varphi, \h),  
\end{align*}
which yields that $(\u^\ast, \varphi^\ast, \h^\ast)$ is a solution of optimal control problem.
\end{proof}
The $\h^\ast $ obtained in the Theorem \ref{Existence of an optimal control} is called an optimal control and the corresponding solution 
$(\u^\ast, \varphi^\ast, \h^\ast) $ is called the optimal solution.

\section{Linearized system}
In this section, we will derive the linearized system and establish the existence and uniqueness results for its weak solutions. Then we will show that the Fre\`chet derivative of the control to state operator satisfies the linearized system.

For a fixed control $\hat{\h} \in \mathcal{U}$, let, $\mathcal{S}(\hat{\h}) = (\hat{\u}, \hat{\varphi}) \in \mathcal{V}$ be the strong solution of the system \eqref{equ P} corresponding to boundary data $M\Tilde{\hat{\h}}$. Clearly $\hat{\h}$ has to satisfy \eqref{compatibility} that is $M\Tilde{\hat{\h}}|_{t=0}=\u_0|_{\Gamma}$. Let a control $\eta \in \mathcal{U}-\{\hat{\h}\}$ be given. To prove Fr\'echet differentiability of control to state operator, we consider the following system, which we obtain by linearising the state problem \eqref{equ P} around $(\hat{\u}, \hat{\varphi}).$ In this process we take $\u = \hat{\u} + \w, \, \varphi = \hat{\varphi} + \psi, \, \pi = \hat{\pi} + \overline{\pi}$. Substituting this, we obtain the following linearized system as 
\begin{equation}\label{linearise equ}
\left\{
\begin{aligned}
  &  \w_t - \nu \Delta\w + (\hat{\u} \cdot \nabla) \w + (\w \cdot \nabla)\hat{\u} + \nabla\overline{\pi} = - \Delta\hat{\varphi}\nabla{\psi} - \Delta{\psi}\nabla{\hat{\varphi}} \\
  & + F'(\hat{\varphi})\nabla\psi+ F''(\hat{\varphi})\psi\nabla\hat{\varphi}, \, \text{ in }Q\\
    & \text{div}~\w = 0,\, \text{ in }Q,\\
    & \psi_t + \w \cdot \nabla \hat{\varphi} + \hat{\u} \cdot \nabla \psi = \Delta \mu_{\psi}, \, \text{ in } Q,\\
    & \mu_{\psi} = - \Delta \psi + F''(\hat{\varphi})\psi, \, \text{ in }Q,\\
    & \w = M\Tilde{\mathbf{\eta}}, \, \text{ on } \Sigma,\\
    & \frac{\partial \psi}{\partial n} = 0 , \, \frac{\partial \mu}{\partial n} = 0,\Sigma,\\
    &\w|_{t=0} = 0, \psi|_{t=0} = 0, \text{ in }\Omega.
\end{aligned}
\right.
\end{equation}
\subsection{Existence of a weak solution of linearized system}
Now we state the existence result of the linearized system \eqref{linearise equ} and prove it by using a Faedo-Galerkin approximation scheme.
\begin{theorem}\label{lin equ}
Let $F$ satisfy the Assumption \ref{prop of F}. Then for any $\eta \in \mathcal{U} - \{\hat{\h}\}$ satisfying $M\Tilde{\eta}|_{t=0}= 0$, the system \eqref{linearise equ} admits a unique weak solution $(\w, \psi)$ such that
\begin{align*}
    \w &\in \mathrm{L}^\infty(0, T; \mathbb{L}^2_{div}(\Omega)) \cap \mathrm{L}^2(0, T; \H^1_{div}(\Omega)) \cap \mathrm{H}^1(0, T; \V'_{div}(\Omega)),\\
    \psi&\in \mathrm{L}^\infty(0, T; \mathrm{H}^1(\Omega)) \cap \mathrm{L}^2(0, T; \mathrm{H}^2(\Omega)) \cap \mathrm{H}^1(0, T; (\mathrm{H}^1)'(\Omega)).
\end{align*}
\end{theorem}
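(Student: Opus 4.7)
\textbf{Proof plan for Theorem \ref{lin equ}.} The system \eqref{linearise equ} is \emph{linear} in the unknown $(\w,\psi)$, with coefficients depending on the fixed strong solution $(\hat\u,\hat\varphi)\in\mathcal{V}$ and source terms determined by $\hat\varphi$ and the datum $\eta$. The plan is therefore a standard lifting $+$ Faedo--Galerkin $+$ energy estimate argument, with the regularity of $(\hat\u,\hat\varphi)$ doing the heavy lifting.

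\emph{Step 1: Lifting.} Apply Theorem \ref{elliptic lifting} to $\eta\in\mathcal{U}$ (note $\eta$ satisfies \eqref{e1} by definition of $\mathcal{U}$) to obtain a strong Stokes lift $\w_e\in \mathrm{H}^1(0,T;\H^1_{div})\cap\mathrm{L}^2(0,T;\H^3_{div})$ with $\w_e|_{\partial\Omega}=\eta$. Setting $\overline\w:=\w-\w_e$ yields an equivalent system for $(\overline\w,\psi)$ with homogeneous Dirichlet condition $\overline\w|_{\partial\Omega}=0$, and an extra known forcing involving $\w_e$, $\partial_t\w_e$ and cross terms $(\hat\u\cdot\nabla)\w_e$, $(\w_e\cdot\nabla)\hat\u$ on the right-hand side, each of which belongs to $\mathrm{L}^2(0,T;\mathbb{L}^2)$ thanks to the regularity of $\hat\u$ and $\w_e$.

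\emph{Step 2: Galerkin scheme.} Pick a Hilbert basis $\{\mathbf{e}_j\}$ of $\V_{div}$ consisting of eigenfunctions of the Stokes operator $\mathbf{A}$, and a basis $\{f_j\}$ of $\mathrm{H}^1(\Omega)$ formed by eigenfunctions of the Neumann Laplacian. Look for $\overline\w^n(t)=\sum_{j=1}^n a_j(t)\mathbf{e}_j$ and $\psi^n(t)=\sum_{j=1}^n b_j(t)f_j$ solving the projected weak formulation of \eqref{linearise equ}, together with the analogously projected identity for $\mu_\psi^n=-\Delta\psi^n+F''(\hat\varphi)\psi^n$. Because the system is linear in $(a_j,b_j)$ and all coefficient functions of time are at least $\mathrm{L}^2(0,T)$-integrable (using $\hat\u\in\mathrm{L}^\infty(0,T;\H^1_{div})\cap\mathrm{L}^2(0,T;\H^2_{div})$, $\hat\varphi\in\mathrm{L}^\infty(0,T;\mathrm{H}^2)$, and Assumption \ref{prop of F}), Carathéodory theory yields a unique global solution on $[0,T]$ to the resulting linear ODE system.

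\emph{Step 3: A priori estimates.} This is the main technical step. Test the velocity equation with $\overline\w^n$; the troublesome right-hand side, which after rewriting reads $(\mu_\psi^n\nabla\hat\varphi+\hat\mu\nabla\psi^n,\overline\w^n)$, is controlled using $\hat\varphi\in\mathrm{L}^\infty(\mathrm{H}^2)\hookrightarrow\mathrm{L}^\infty(\mathrm{L}^\infty)$, $\hat\mu\in\mathrm{L}^2(\mathrm{H}^1)\cap\mathrm{L}^\infty(\mathrm{L}^2)$ (recall $\hat\varphi\in\mathcal{V}$), and $\mathrm{H}^1\hookrightarrow\mathrm{L}^p$ in 2D, together with Young's inequality that absorbs the top-order pieces $\|\nabla\overline\w^n\|^2$ and $\|\Delta\psi^n\|^2$ into the dissipation. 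For the Cahn--Hilliard part, test the $\psi$-equation with $\mu_\psi^n$ and the identity for $\mu_\psi^n$ with $\psi^n_t$; adding gives, after controlling the convective terms $(\overline\w^n\cdot\nabla\hat\varphi,\mu_\psi^n)$ and $(\hat\u\cdot\nabla\psi^n,\mu_\psi^n)$ by the same tools plus Agmon/Gagliardo--Nirenberg, a bound on $\|\nabla\psi^n\|^2$ and $\|\nabla\mu_\psi^n\|^2$. Elliptic regularity of $-\Delta\psi^n=\mu_\psi^n-F''(\hat\varphi)\psi^n$ with Neumann data promotes this to an $\mathrm{L}^2(0,T;\mathrm{H}^2)$ bound on $\psi^n$. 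Assumption \ref{prop of F}(1),(4) guarantees the terms involving $F''(\hat\varphi)$ and $F'''(\hat\varphi)\hat\varphi_t$ are controllable in 2D. A single Gronwall application then produces the desired uniform bounds
\[
\|\overline\w^n\|_{\mathrm{L}^\infty(0,T;\G_{div})\cap\mathrm{L}^2(0,T;\V_{div})}+\|\psi^n\|_{\mathrm{L}^\infty(0,T;\mathrm{H}^1)\cap\mathrm{L}^2(0,T;\mathrm{H}^2)}\le C\|\eta\|_{\mathcal{U}}.
\]
Estimates on $\partial_t\overline\w^n$ in $\mathrm{L}^2(0,T;\V'_{div})$ and on $\partial_t\psi^n$ in $\mathrm{L}^2(0,T;(\mathrm{H}^1)')$ follow directly from the equations by duality.

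\emph{Step 4: Passage to the limit and uniqueness.} Extract weak/weak-$\ast$ convergent subsequences and, by Aubin--Lions, strong convergence of $\overline\w^n$ in $\mathrm{L}^2(0,T;\G_{div})$ and of $\psi^n$ in $\mathrm{L}^2(0,T;\mathrm{H}^1)$. Linearity lets us pass to the limit in every term of the weak formulation without the usual compensated-compactness subtleties of the nonlinear problem. Undoing the lift gives the asserted weak solution $(\w,\psi)=(\overline\w+\w_e,\psi)$. Uniqueness is immediate: two solutions have a difference satisfying the homogeneous version of \eqref{linearise equ} (with $\eta=0$), and repeating Step 3 on the difference yields $\w\equiv 0$, $\psi\equiv 0$ by Gronwall.

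\emph{Main obstacle.} The delicate point is the coupling term $(\mu_\psi\nabla\hat\varphi+\hat\mu\nabla\psi,\overline\w)$ on the right of the velocity equation: it involves second derivatives of both $\hat\varphi$ and $\psi$. The argument relies essentially on the strong-solution regularity $(\hat\u,\hat\varphi)\in\mathcal{V}$ of Theorem \ref{strong sol} (in particular $\hat\varphi\in\mathrm{L}^\infty(0,T;\mathrm{H}^2)$ and $\hat\u\in\mathrm{L}^2(0,T;\H^2_{div})$) to distribute derivatives and close the energy inequality without losing dissipation.
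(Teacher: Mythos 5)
Your proposal is correct and follows essentially the same route as the paper: a Stokes lifting of $\eta$, a Faedo--Galerkin scheme built on Stokes and Neumann eigenfunctions, energy estimates closed by Gronwall using the strong-solution regularity of $(\hat{\u},\hat{\varphi})$, passage to the weak limit, and uniqueness by linearity. The only divergence is the choice of multipliers for the Cahn--Hilliard part --- you test with $\mu_{\psi}^n$ and $\psi^n_t$ and recover the $\mathrm{L}^2(0,T;\mathrm{H}^2)$ bound on $\psi^n$ a posteriori by elliptic regularity, whereas the paper tests with $\psi_n-\Delta\psi_n$ and $-\Delta\mu_n+\Delta\psi_n$ so that $\|\Delta\psi_n\|^2$ and $\|\nabla\mu_n\|^2$ appear directly in the dissipation --- but both variants close the same estimate.
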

\begin{proof}
We prove the theorem using the Faedo-Galerkin approximation scheme. Let us consider the families of functions $(\u_k) \text{ and } (\gamma_k)$, eigenfunctions of the Stokes operator and the Neumann operator $-\Delta+I$ respectively. We consider n-dimensional subspaces $\U_n := \langle u_1, \cdots, \u_n\rangle$ and  $\Psi_n = \langle \psi_1, \cdots,\psi_n \rangle$, spanned by $n$-eigenfunctions, and orthogonal projection on these spaces, $P_n :=P_{\U_n} \text{ and } \overline{P}_n = P_{\Psi_n}$. 
We look for the functions 
\begin{align*}
    &\w_n(t, \x) = \overline{\w}_n(t, \x) + \w_e(t, \x) = \sum_{i=0}^{i=n} a_i(t)\u_i(\x) + \w_e(t, \x),\\
    &\psi_n(t, \x) = \sum_{i=0}^{i=n} b_i(t) \gamma_i(\x),
\end{align*}
which solves the following approximated problem a.e. in $[0, T]$ and for $i = 1, . . . , n$
\begin{align}
    &\langle \partial_t \overline{\w}_n, \u_i \rangle + (\nabla\overline{\w}_n \cdot\nabla \u_i) +( (\hat{\u} \cdot \nabla) (\overline{\w}_n + \w_e), \u_i)  + (((\overline{\w}_n + \w_e) \cdot\nabla) \hat{\u} , \u_i) \no \\ & = -(\Delta\hat{\varphi}\nabla{\psi}_n, \u_i) - (\Delta{\psi}_n\nabla{\hat{\varphi}}, \u_i) +(\psi_n F''(\hat{\varphi})\nabla \hat{\varphi}, \u_i) -\langle\partial_t \w_e, \u_i\rangle  \, \text{ for all } \u_i \in \U_n,  \, \text{ in } \Omega, \label{var w}\,\\
    &\langle \partial_t \psi_n, \gamma_i \rangle +((\overline{\w}_n + \w_e) \cdot \nabla \hat{\varphi} , \gamma_i) + (\hat{\u} \cdot \nabla \psi_n) , \gamma_i) =( \Delta\mu_n, \gamma_i), \, \text{ for all } \gamma_i \in \Psi_n,  \, \text{ in } \Omega, \label{var psi}\\
    & \overline{\w}_n = 0, \quad \frac{\partial\psi}{\partial\n} = 0, \, \text{ on } \Gamma,\\
    &  \overline{\w}_n \big|_{t=0} = 0 , \, \psi_n\big|_{t=0} = 0 \, \text{ in } \Omega,
    \end{align}
    where $\w_e$ is a solution of \eqref{equ1} with $\w_e(t) = M \tilde{\eta}(t)$ on $\Gamma$ and 
 a.e $t \in [0, T]$.\\
    This is a Cauchy problem of the system of $2n$ ordinary differential equations with $2n$ unknowns $a_i, \, b_i$, which can be solved by the Cauchy-Lipschitz theorem. So we get a unique solution $(\overline{\w}_n, \psi_n)$ of approximated system \eqref{var w}-\eqref{var psi}.
   Now we prove some a priori estimates for approximated solution $(\overline{\w}_n, \psi_n)$ independent of $n$. Let us take $\u_i=\overline{\w}_n$ in  \eqref{var w} and $\gamma_i = \psi_n - \Delta\psi_n$ in \eqref{var psi}, and multiply the equation $\eqref{linearise equ}_4$ with $-\Delta\mu_{n} + \Delta\psi_n$, and adding these equations we get
    \begin{align}
      & \frac{1}{2}\frac{d}{dt}(\|\overline{\w}_n\|^2 + \|\nabla\psi_n\|^2 + \|\psi_n\|^2)  + \nu \|\nabla\overline{\w}_n\|^2 + \|\Delta\psi_n\|^2 + \|\nabla \mu_n\|^2 \no \\ & \leq \int_\Omega [(\hat{\u} \cdot \nabla (\overline{\w}_n + \w_e)) \, \overline{\w}_n] dx + \int_\Omega [(\overline{\w}_n + \w_e) \cdot \nabla \hat{\u}) \, \overline{\w}_n] dx  -\int_\Omega \Delta\hat{\varphi}(\nabla{\psi}_n\cdot \overline{\w}_n) dx \no\\&-\int_\Omega\Delta{\psi}_n(\nabla{\hat{\varphi}} \cdot \overline{\w}_n) dx   + \int_\omega \psi_n F''(\hat{\varphi})(\nabla \hat{\varphi} \cdot \overline{\w}_n) dx - \int_\Omega \partial_t \w_e \cdot \overline{\w}_n dx +  \int_\Omega ((\overline{\w}_n + \w_e) \cdot \nabla \hat{\varphi}) \, \Delta \psi_n dx\no\\& + \int_\Omega(\hat{\u} \cdot \nabla \psi_n) \, \Delta \psi_n dx  - \int_{\Omega} F''(\hat{\varphi}) \psi_n \Delta\mu_n dx + \int_{\Omega} F''(\hat{\varphi}) \psi_n \Delta\psi_n dx - \int_{\Omega} ((\overline{\w}_n + \w_e) \cdot \nabla \hat{\varphi}) \, \psi_n \label{linearise approx}\no\\
      & \quad=\sum_{j=1}^{15}K_j
    \end{align}
    We estimate each $K_1 \cdots K_{15}$ on the right-hand side of the above inequality individually. We will use the following abbreviation in the rest of the proof: C is a generic constant that may depend on the norm of $(\hat{\u}, \hat{\varphi})$, $\eta$ but not on $n$. Using H{\"o}lder inequality, Young's inequality, and Agmon's inequality repeatedly gives the following series of estimates:
    \begin{align*}
|K_2| & 
        \leq \|\hat{\u}\|_{\L^4}\|\nabla\overline{\w}_n\| \, \|\w_e\|_{\L^4} \;
         \leq C \|\hat{\u}\|^2_{\H^1_{div}} \|\eta\|^2_{\V^{\frac{3}{2}}(\Gamma')} + \frac{\nu}{8} \|\nabla\overline{\w}_n\|^2,\\
|K_3| & \leq \|\overline{\w}_n\| \, \|\nabla\hat{\u}\|_{\L^4} \|\overline{\w}_n\|_{\L^4} \; \leq C \|\hat{\u}\|^2_{\H^2_{div}} \|\overline{\w}_n\|^2 + \frac{\nu}{8} \|\nabla\overline{\w}_n\|^2,\\
|K_4| & \leq  \|\w_e\|_{\L^4} \|\nabla\overline{\w}_n\| \, \|\hat{\u}\|_{\L^4}  \leq C \|\w_e\|^2_{\H^1_{div}} \|\hat{\u}\|^2_{\H^1_{div}} +  \frac{\nu}{8} \|\nabla\overline{\w}_n\|^2 \no\\
|K_5| &\leq     \|\Delta\hat{\varphi}\|_{\mathrm{L}^\infty}\|\nabla\psi_n\|\|\overline{\w}_n\| \; \leq \frac{1}{2}\|\Delta\hat{\varphi}\|^2_{\mathrm{L}^\infty}\|\nabla\psi_n\|^2 + \frac{1}{2}\|\overline{\w}_n\|,\\
|K_6| & \leq \|\Delta\psi_n\|\|\nabla\hat{\varphi}\|_{\mathrm{L}^\infty}\|\overline{\w}_n\| \;  \leq \frac{3}{4} \|\nabla\hat{\varphi}\|_{\mathrm{L}^\infty}^2\|\overline{\w}_n\|^2 + \frac{1}{12}\|\Delta\psi_n\|^2,\\
|K_7| & = \Big| \int_\Omega F'(\hat{\varphi}) \nabla \psi_n \cdot \overline{\w}_n \, dx \Big|\;  \leq \|F'(\hat{\varphi})\|_{\mathbf{L}^\infty} \|\nabla\psi_n\| \, \|\overline{\w}_n\| \no \\ & \quad \leq C \|\nabla\psi_n\|^2 + \frac{1}{2} \|\overline{\w}_n\|^2,\\
|K_8| & \leq  \frac{1}{2} \|\partial_t \w_e\|^2 + \frac{1}{2} \|\overline{\w}_n\|^2,\\
|K_9| & \leq 
          \frac{1}{12}\|\Delta\psi_n\|^2 + \frac{3}{4} \|\nabla\hat{\varphi}\|^2_{\mathrm{L}^\infty}\|\overline{\w}_n\|^2,\\
|K_{10}| & \leq \frac{1} {12}\|\Delta\psi_n\|^2 + \frac{3}{4}\|\nabla\hat{\varphi}\|^2\|\w_e\|^2,\no\\
|K_{11}| &\leq C  \|\hat{\u}\|^{\frac{1}{2}}\|\hat{\u}\|^{\frac{1}{2}}_{\H^2}\|\nabla\psi_n\|\|\Delta\psi_n\| \; \leq \frac{1}{12}\|\Delta\psi_n\|^2 + \frac{3}{4C}\|\hat{\u}\|^2_{\H^2}\|\nabla\psi_n\|^2,\\
|K_{12}| & = \Big|\int_\Omega 
           F'''(\hat{\varphi}) \psi_n (\nabla\hat{\varphi}\cdot \nabla\mu_n) dx  + \int_\Omega F''(\hat{\varphi})\nabla\psi_n \cdot \nabla\mu_n dx \Big| \no\\ & \leq \frac{1}{2}\|\nabla\mu_n\|^2 + \frac{C}{4}\|\F'''(\hat{\varphi})\nabla\hat{\varphi}\|_{\mathrm{L}^\infty}\|\psi_n\|^2 + \frac{C}{4} \| F''(\hat{\varphi})\|^2_{\mathrm{L}^\infty}\|\nabla\psi_n\|^2,\\
|K_{13}| & \leq \frac{1}{12}\|\Delta\psi_n\|^2 + \frac{3}{4C}\|F''(\hat{\varphi})\|^2_{\mathrm{L}^\infty}\|\psi_n\|^2 \\
|K_{14}| & \leq \frac{\nu}{8}\|\nabla\overline{\w}_n\|^2 + C\|\hat{\varphi}\|^2_{\mathrm{L}^4}\|\nabla\psi_n\|^2\\
|K_{15}| & \leq \|\w_e\|_{\L^\infty}\|\nabla\hat{\varphi}\|\|\psi_n\| 
 \leq C \|\w_e\|_{\H^1}\|\w_e\|_{\H^2} + \|\nabla\hat{\varphi}\|^2\|\psi_n\|^2.\\
      \end{align*}
      Adding all the estimates of the right-hand side of \eqref{linearise approx}, we obtain 
      \begin{align}
      & \frac{1}{2}\frac{d}{dt}(\|\overline{\w}_n\|^2 + \|\psi_n\|^2_{\mathrm{H}^1}) +\frac{\nu}{2} \|\nabla\overline{\w}_n\|^2 + \frac{2}{3}\|\Delta\psi_n\|^2 + \frac{1}{2} \|\nabla\mu_n\|^2 \no \\ \, & \quad\leq  C(2\|\hat{\u}\|^2_{\H^1_{div}} + \frac{3}{4}\|\nabla\hat{\varphi}\|^2 + \frac{3}{2})(\|\w_e\|^2_{\H^1}+\|\w_e\|^2_{\H^2}) + C\big(\|\hat{\u}\|^2_{\H^2} + \|\Delta\hat{\varphi}\|^2_{\mathrm{L}^\infty} + \|\nabla\hat{\varphi}\|^2_{\mathrm{L}^\infty} \no\\ & \qquad + \|\hat{\varphi}\|^2_{\mathrm{L}^4} +\|\nabla\varphi\|^2 +\|\F'''(\hat{\varphi})\nabla\hat{\varphi}\|_{\mathrm{L}^\infty} + \| F''(\hat{\varphi})\|^2_{\mathrm{L}^\infty}\big) (\|\overline{\w}_n\|^2 + \|\psi_n\|^2_{\mathrm{H}^1}).
\end{align}
Observe that, with the help of Theorem \ref{strong solution} the map $t \rightarrow G(\hat{\u}(t), \hat{\varphi}(t))$ belongs to $\mathrm{L}^1(0, T)$ where\\
\begin{align*}
G(\hat{\u}(t), \hat{\varphi}(t)) := \|&\hat{\u}(t)\|^2_{\H^2} + \|\Delta\hat{\varphi}(t)\|^2_{\mathrm{L}^\infty} + \|\nabla\hat{\varphi}(t)\|^2_{\mathrm{L}^\infty} + \|\hat{\varphi}(t)\|^2_{\mathrm{L}^4} + \|\nabla\hat{\varphi}(t)\|^2 \\
&+\|\F'''(\hat{\varphi}(t))\nabla\hat{\varphi}(t)\|_{\mathrm{L}^\infty} + \| F''(\hat{\varphi}(t))\|^2_{\mathrm{L}^\infty}
\end{align*}
Thus owing to Gronwall's lemma  and the estimates in Theorem \ref{elliptic lifting} yields
\begin{align*}
    \|\overline{\w}_n\|_{\mathrm{L}^\infty(0, T, \G_{div}) \cap \mathrm{L}^2(0, T, \V_{div})} &  \leq C\|\eta\|_{\mathcal{U}},\\
   \text{and} \; \;  \|\psi_n\|_{\mathrm{L}^\infty(0, T, \mathrm{H}^1) \cap \mathrm{L}^2(0, T, \mathrm{H}^2)} & \leq C \|\eta\|_{\mathcal{U}}.
\end{align*}
By comparison in \eqref{var w}, \eqref{var psi} we can indeed get uniform bound on $\partial_t\overline{\w}_n$ and $\partial_t\psi_n$ as 
\begin{align*}
    \|\partial_t\overline{\w}_n\|_{\mathrm{L}^2(0, T; \V_{div}')} &\leq C\|\eta\|_{\mathcal{U}}\\
    \|\partial_t\psi_n\|_{\mathrm{L}^2(0, T; (\mathrm{H}^1(\Omega))')} & \leq C\|\eta\|_{\mathcal{U}}.
\end{align*}
Then we can get a pair $(\overline{\w}, \psi)$ such that
\begin{align*}
    \overline{\w} &\in \mathrm{L}^\infty(0, T; \G_{div}) \cap \mathrm{L}^2(0, T; \V_{div}) \cap \mathrm{H}^1(0, T; \V'_{div}),\\
    \psi&\in \mathrm{L}^\infty(0, T; \mathrm{H}^1(\Omega)) \cap \mathrm{L}^2(0, T; \mathrm{H}^2(\Omega)) \cap \mathrm{H}^1(0, T; (\mathrm{H}^1(\Omega))').
\end{align*}
which is the weak limit of $(\overline{\w}_n, \psi_n)$. Then we can pass to limit as $n \rightarrow \infty$ in \eqref{var psi} - \eqref{var psi} and also able to verify the pair $(\w, \psi)$ with $\w = \overline{\w} + \w_e$ satisfy the weak formulation of \eqref{linearise equ}.\\
 Since the system \eqref{linearise equ} is a linear system, the uniqueness of weak solutions follows easily.
\end{proof}

\subsection{Differentiability of Control to State Operator}
Let us define the affine space of control with imposing the compatibility condition: $$\mathcal{U}_c = \{\h(x, t)|
\, \h\in\mathcal{U} \text{ with } M\Tilde{\h}|_{t=0}=\u_0|_{\Gamma}\}.$$ Consider the control to state operator $ \mathcal{S} : \mathcal{U}_c \rightarrow \mathcal{V}$ with given initial data $(\u_0, \varphi_0) \in \H^1_{div} \times \mathrm{H}^2$. As $ \mathcal{V} \subseteq \mathcal{W}$,  we can consider $\mathcal{S} \text{ from } \mathcal{U}_c \text{ to the weaker space } \mathcal{W}$.
\begin{definition}
    We define $\mathcal{S}: \mathcal{U}_c \rightarrow \mathcal{W}$ is Fr\'echet differentiable in $\mathcal{U}_c$ if for any $\hat{\h} \in \mathcal{U}_c$, there exist a linear operator $\mathcal{S}'(\hat{\h}): \mathcal{U}_c-\{\hat{\h}\} \rightarrow \mathcal{W}$ such that
\begin{align}\label{diff criteria}
    \lim_{\|\eta\|_{\mathcal{U}}\rightarrow 0}\frac{\|\mathcal{S}(\hat{\h}+ \eta)-\mathcal{S}(\hat{\h})-\mathcal{S}'(\hat{\h})(\eta)\|_{\mathcal{W}}}{\|\eta\|_{\mathcal{U}}} = 0
\end{align}
for any arbitrary small perturbation $\eta \in \mathcal{U}_c-  \{\hat{\h}\} = \{\h-\hat{\h}| \,  \h\in\mathcal{U}_c\}$.
\end{definition}
In the next theorem, we prove the  Fr\'echet differentiability of the control to state operator $\mathcal{S}$.

\begin{theorem}\label{diffrentiability}
    Let $F \in C^4(\mathbb{R})$ and satisfies the assumption \eqref{prop of F}. Also, assume $(\u_0, \varphi_0)$ be a given initial data. Then the control to state operator $\mathcal{S}: \mathcal{U}_c \rightarrow \mathcal{W}$ is Fr\'echet differentiable. Moreover, for any $\hat{\h} \in \mathcal{U}_c$, its  
    Fr\'echet derivative $\mathcal{S}'(\hat{\h})$ is given by
    \begin{align*}
        \mathcal{S}'(\hat{\h})(\eta) = (\w, \psi), \, \forall \eta \in \mathcal{U}_c - \{\hat{\h}\},
    \end{align*}
    where $(\w, \psi)$ is the unique weak solution of the linearized system \eqref{linearise equ}  with control $\eta$, which is linearized around a strong solution $(\hat{\u}, \hat{\varphi})$ of the system \eqref{equ P} with control $\hat{\h}$.
\end{theorem}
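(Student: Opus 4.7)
Fix $\hat{\h}\in\mathcal{U}$ with strong solution $(\hat{\u},\hat{\varphi})=\mathcal{S}(\hat{\h})$, and pick $\eta\in\mathcal{U}\setminus\{\hat{\h}\}$ with $\|\eta\|_{\mathcal{U}}$ small. Denote $(\u^{\eta},\varphi^{\eta})=\mathcal{S}(\hat{\h}+\eta)$, and let $(\w,\psi)$ be the unique weak solution of the linearized system \eqref{linearise equ} around $(\hat{\u},\hat{\varphi})$ with boundary datum $\eta$, provided by Theorem \ref{lin equ}. The plan is to define the remainder
\begin{align*}
\y:=\u^{\eta}-\hat{\u}-\w,\qquad z:=\varphi^{\eta}-\hat{\varphi}-\psi,
\end{align*}
and to prove the quadratic estimate $\|(\y,z)\|_{\mathcal{W}}\leq C\|\eta\|_{\mathcal{U}}^{2}$, which together with \eqref{diff criteria} will identify $(\w,\psi)$ as the Fr\'echet derivative.

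First I would subtract the state equations for $(\u^{\eta},\varphi^{\eta})$ and $(\hat{\u},\hat{\varphi})$, then subtract the linearized system \eqref{linearise equ}. Because both $\u^{\eta}-\hat{\u}$ and $\w$ carry the boundary datum $\eta$, the pair $(\y,z)$ satisfies homogeneous Dirichlet/Neumann conditions, and $\y(0)=0$, $z(0)=0$. The resulting PDE for $(\y,z)$ has linear left-hand side of the same structure as \eqref{linearise equ} with $(\y,z)$ in place of $(\w,\psi)$, but with a source that groups naturally into two kinds of terms: (a) genuinely linear terms in $(\y,z)$ with coefficients coming from $(\hat{\u},\hat{\varphi})$, and (b) purely quadratic remainders in the differences $\tilde{\u}:=\u^{\eta}-\hat{\u}$, $\tilde{\varphi}:=\varphi^{\eta}-\hat{\varphi}$. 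The quadratic remainders come from three sources: the Navier--Stokes convection $(\tilde{\u}\cdot\nabla)\tilde{\u}$, the mixing term $\varphi_{t}+\u\cdot\nabla\varphi$ (contributing $\tilde{\u}\cdot\nabla\tilde{\varphi}$), and the capillary/chemical coupling $\mu\nabla\varphi$. The latter, after absorbing the gradient $\nabla F(\varphi)$ into a pressure, reduces to $-\Delta\varphi\nabla\varphi$, producing a bilinear defect $-\Delta\tilde{\varphi}\nabla\tilde{\varphi}$, plus a contribution from the Taylor expansion $F'(\varphi^{\eta})-F'(\hat{\varphi})-F''(\hat{\varphi})\psi=F''(\hat{\varphi})z+\tfrac12 F'''(\xi)\tilde{\varphi}^{2}$, whose remainder is quadratic in $\tilde{\varphi}$.

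Next, I would perform energy estimates in the same style as the proofs of Theorem \ref{lin equ} and Proposition \ref{cds}: test the $\y$-equation against $\y$ and $\mathbf{A}\y$, test the $z$-equation against $z-\Delta z$, and test the chemical-potential relation against $-\Delta\mu_{z}+\Delta z$. The linear terms in $(\y,z)$ are controlled by Gronwall with integrable coefficients coming from the strong regularity of $(\hat{\u},\hat{\varphi})$ guaranteed by Theorem \ref{strong sol}. The quadratic remainders are estimated in, e.g., $\mathrm{L}^{2}(0,T;\mathrm{L}^{2})$ using H\"older, Sobolev $\H^{1}\hookrightarrow\mathrm{L}^{4}$ and Agmon inequalities together with the continuous-dependence estimate of Proposition \ref{cds}, which yields $\|\tilde{\u}\|_{\mathcal{V}_{1}}+\|\tilde{\varphi}\|_{\mathcal{V}_{2}}\leq C\|\eta\|_{\mathcal{U}}$ in the strong-solution norms. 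Typical bounds look like
\begin{align*}
\|(\tilde{\u}\cdot\nabla)\tilde{\u}\|_{\mathrm{L}^{2}(0,T;\L^{2})}&\leq C\|\tilde{\u}\|_{\mathrm{L}^{\infty}(\H^{1})}\|\tilde{\u}\|_{\mathrm{L}^{2}(\H^{2})}\leq C\|\eta\|_{\mathcal{U}}^{2},\\
\|\Delta\tilde{\varphi}\nabla\tilde{\varphi}\|_{\mathrm{L}^{2}(0,T;\L^{2})}&\leq C\|\tilde{\varphi}\|_{\mathrm{L}^{\infty}(\mathrm{H}^{2})}\|\tilde{\varphi}\|_{\mathrm{L}^{2}(\mathrm{H}^{2})}\leq C\|\eta\|_{\mathcal{U}}^{2},
\end{align*}
and the $F$-Taylor remainder is handled similarly using Assumption \ref{prop of F}(1),(4) and $F\in C^{4}$.

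The main obstacle I anticipate is the capillary term $\Delta\tilde{\varphi}\nabla\tilde{\varphi}$ and the convective coupling between the Navier--Stokes and Cahn--Hilliard blocks: their Sobolev bookkeeping is tight and requires the full $\mathrm{L}^{2}(0,T;\mathrm{H}^{4})$ regularity from Theorem \ref{strong sol} in order to be square in $\|\eta\|_{\mathcal{U}}$ rather than merely small. Once the estimate $\|(\y,z)\|_{\mathcal{W}}\leq C\|\eta\|_{\mathcal{U}}^{2}$ is established by Gronwall, the Fr\'echet differentiability criterion \eqref{diff criteria} follows directly, and the linearity of $\eta\mapsto(\w,\psi)$ (immediate from the structure of \eqref{linearise equ}) together with the uniform bound of Theorem \ref{lin equ} identifies $\mathcal{S}'(\hat{\h})\eta=(\w,\psi)$ as a bounded linear operator from $\mathcal{U}$ into $\mathcal{W}$.
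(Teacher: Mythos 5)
Your proposal follows essentially the same route as the paper's proof: the same remainder decomposition $\y=\u^{\eta}-\hat{\u}-\w$, $z=\varphi^{\eta}-\hat{\varphi}-\psi$ satisfying a homogeneous-data system, the same test functions ($\y$ for the velocity block, $z-\Delta z$ and $-\Delta\mu_{z}+\Delta z$ for the Cahn--Hilliard block), the same Taylor expansion $F'(\varphi^{\eta})-F'(\hat{\varphi})-F''(\hat{\varphi})\psi=F''(\hat{\varphi})z+\tfrac12 F'''(\cdot)\tilde{\varphi}^{2}$ requiring $F\in C^{4}$, and the same quadratic bound $\|(\y,z)\|_{\mathcal{W}}\leq C\|\eta\|_{\mathcal{U}}^{2}$ via continuous dependence plus Gronwall. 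The only cosmetic deviations are that you additionally test against $\mathbf{A}\y$ (unnecessary, since $\mathcal{W}$ only asks for $\mathrm{L}^{\infty}\mathrm{L}^{2}\cap\mathrm{L}^{2}\H^{1}$ control of $\y$, and the paper bounds the quadratic sources using the weak continuous-dependence estimate of Proposition \ref{cdw} rather than the strong one) and that you rewrite the capillary force as $-\Delta\varphi\nabla\varphi$ modulo pressure, where the paper keeps the splitting $\mu_{\rho}\nabla\hat{\varphi}+\mu_{\xi}\nabla\xi+\mu_{\hat{\varphi}}\nabla\rho$.
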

\begin{proof}
Let $\hat{\h}$ be a given fixed control and $(\hat{\u}, \hat{\varphi}) = \mathrm{S}(\hat{\h})$ be the strong solution of \eqref{equ P} with control $M\Tilde{\hat{\h}}$. Let $(\overline{\u}, \overline{\varphi})$ be the strong solution of the system \eqref{equ P} with control $M\Tilde{\hat{\h}} + M\Tilde{\mathbf{\eta}}$. Let $\z = \overline{\u} - \hat{\u}, \, \xi = \overline{\varphi} - \hat{\varphi}$. Then $(\z, \xi)$ satisfies,
\begin{equation}\label{1st difference equation}
\left\{
\begin{aligned}
    &\z_t - \nu \Delta \z + \z\cdot\nabla\z + \hat{\u}\cdot\nabla\z + \z\cdot\nabla\hat{\u} + \nabla\pi_{\z} = \mu_{\xi}\nabla\hat{\varphi} + \mu_{\xi}\nabla\xi + \mu_{\hat{\varphi}}\nabla\xi, \, \text{ in } Q,\\
    &\xi_t + \z\cdot\nabla\xi + \hat{\u}\cdot\nabla\xi + \z\cdot\nabla\hat{\varphi} = \Delta\mu_{\xi}, \, \text{ in } Q,\\
    & \text{ div }\z = 0,  \, \text{ in } Q,\\
    &\mu_{\xi} = - \Delta\xi + F'(\overline{\varphi}) - F'(\hat{\varphi}),  \, \text{ in } Q,\\
    & \z = M\Tilde{\mathbf{\eta}}, \, \text{ on } \Sigma,\\
    &\frac{\partial\xi}{\partial\n} = 0 = \frac{\partial\mu_{\xi}}{\partial\n} \, \text{ on } \Sigma,\\
    & \z(0) = 0 , \, \xi(0) = 0, \, \text{ in } \Omega.
\end{aligned}   
\right.
\end{equation}
where $\pi_{\z} = \pi_{\overline{\u}} - \pi_{\hat{\u}}$. Now from Proposition \ref{cds}, continuous dependence of weak solutions, we have
\begin{align}
 \|\z\|^2_{\mathrm{L}^\infty(0, T; \mathbb{L}^2)\cap\mathrm{L}^2(0, T; \H^1_{div})} + \|\xi\|^2_{\mathrm{L}^\infty(0, T; \mathrm{H}^1)\cap\mathrm{L}^2(0, T; \mathrm{H}^2)} + \|\nabla\mu_{\xi}\|^2 \leq \tilde{C}  \|\mathbf{\eta}\|^2_{\mathcal{U}}.  
\end{align}   
Let us define $\y = \z - \w, \, \rho = \xi - \psi$ where $(\w, \psi)$ is the solution of linearized system \eqref{linearise equ} corresponding to non zero boundary for velocity as $M\Tilde{\eta}$. Then, $(\y, \rho)$ satisfies,
\begin{equation}\label{2nd difference equation}
\left\{
\begin{aligned}
   & \y_t - \nu \Delta \y + \z\cdot\nabla\z + \hat{\u}\cdot\nabla\y + \y\cdot\nabla\hat{\u} + \nabla\pi_{\y} = \mu_{\rho}\nabla\hat{\varphi} + \mu_{\xi}\nabla\xi + \mu_{\hat{\varphi}}\nabla\rho,\, \text{ in } Q,\\
    &\rho_t + \y\cdot\nabla\hat{\varphi} + \hat{\u}\cdot\nabla\rho + \z\cdot\nabla\xi  = \Delta\mu_{\rho},\, \text{ in } Q,\\
    &\mu_{\rho}  = -\Delta\rho + F'(\overline{\varphi}) - F'(\hat{\varphi}) - F''(\hat{\varphi})\psi,\, \text{ in } Q,\\
      & \text{ div }\y  = 0,\, \text{ in } Q,\\
   & \y  = 0,\, \text{ on } \Sigma,\\
   & \frac{\partial\rho}{\partial\n} = 0 = \frac{\partial\mu_{\rho}}{\partial\n},\, \text{ on } \Sigma,\\
   & \y(0) = 0, \, \rho(0)  = 0, \, \text{ in } \Omega.
\end{aligned}   
\right.
\end{equation}
We have from Theorem \ref{lin equ}
\begin{align*}
    \|(\w, \psi)\|_{\mathcal{W}} \leq C\|\eta\|_{\mathcal{U}}.
\end{align*}
We aim to show 
\begin{align*}
    \frac{\|(\y, \rho)\|_{\mathcal{W}}}{\|\eta\|_{\mathcal{U}}} \rightarrow 0 \text{ as } \|\eta\|_{\mathcal{U}} \rightarrow 0,
\end{align*}
which will directly imply \eqref{diff criteria}. For that,
we multiply the equation $\eqref{2nd difference equation}_1$ by $\y$ to yield
\begin{align}
    \frac{1}{2}\frac{d}{dt}\|\y\|^2 + \nu \|\nabla\y\|^2 & = -(\z \cdot \nabla\z, \y) - (\y \cdot \nabla \hat{\u}, \y) + (\mu_{\rho}\nabla\hat{\varphi}, \y) + (\mu_{\xi}\nabla\xi, \y) + (\mu_{\hat{\varphi}}\nabla\rho, \y) \label{test y} \no\\ & := \sum_{i=1}^{6} I_i 
\end{align}
Again we multiply the equation $\eqref{2nd difference equation}_2$ and $\eqref{2nd difference equation}_3$ by $\rho - \Delta \rho$ and $-\Delta \mu_{\rho} + \Delta \rho$  respectively and adding them together we get 
\begin{align}
    \frac{1}{2}\frac{d}{dt}\|\rho\|^2_{\mathrm{H}^1} + \|\nabla\mu_{\rho}\|^2 + \|\Delta\rho\|^2 & = -(\y\cdot\nabla\hat{\varphi}, \rho) - (\z\cdot\nabla\xi, \rho)
    + (\y\cdot\nabla\hat{\varphi}, \Delta\rho)   + (\hat{\u}\cdot\nabla\rho, \Delta\rho) + (\z\cdot\nabla\xi, \Delta\rho) \no \\ 
    &  +(F'(\overline{\varphi}) - F'(\hat{\varphi}) - F''(\hat{\varphi})\psi, \Delta\rho)  - ( F'(\overline{\varphi}) - F'(\hat{\varphi}) - F''(\hat{\varphi})\psi, \Delta\mu_{\rho}) \label{test rho}\no\\
    & := \sum_{i=1}^{7}J_i  
\end{align}
We estimate the right-hand side of \eqref{test y} by using H{\"o}lder inequality, Agmon's inequality, Young's inequality, and Sobolev inequality and derive the following,

\begin{align*}
    |I_1| & \leq C \|\z\|_{\L^4}\|\nabla\z\|\|\y\|_{\L^4} \; \leq C \|\z\|^4_{\H^1} + \frac{\nu}{8} \|\nabla\y\|^2\\
    |I_2| & \leq \|\y\|_{\L^4}\|\nabla\hat{\u}\|\|\y\|_{\L^4} \;  \leq C\|\nabla\hat{\u}\|^2\|\y\|^2 + \frac{\nu}{8}\|\nabla\y\|^2\\
    |I_3| & \leq \|\nabla\mu_{\rho}\|\|\hat{\varphi}\|_{\mathrm{L}^\infty}\|\y\| \;
     \leq \frac{1}{4}\|\nabla\mu_{\rho}\|^2 + \|\hat{\varphi}\|^2_{\mathrm{H}^2}\|\y\|^2\\
    |I_4| & = |(\xi\nabla\mu_{\xi}, \y)|\; 
    \leq \|\xi\|_{\mathrm{L}^4}\|\nabla\mu_{\xi}\|\|\y\|_{\L^4}\; \leq C \|\xi\|^2_{\mathrm{L}^4} \|\nabla\mu_{\xi}\|^2 + \frac{\nu}{8}\|\nabla\y\|^2\\
    |I_5| &\leq \|\nabla\mu_{\hat{\varphi}}\|\|\rho\|_{\mathrm{L}^4}\|\y\|_{\L^4}\; \leq C \|\mu_{\hat{\varphi}}\|^2\|\rho\|^2_{\mathrm{H}^1} + \frac{\nu}{8}\|\nabla\y\|^2.
    \end{align*}
    Substituting the estimates of $I_1 \text{ to } I_5$ in \eqref{test y},
    \begin{align}\label{equ y}
        \frac{1}{2}\frac{d}{dt}\|\y\|^2 + \frac{\nu}{2} \|\nabla\y\|^2  &\leq  C(\|\nabla\hat{\u}\|^2 + \|\nabla\hat{\varphi}\|^2)\|\y\|^2 + C \|\nabla\mu_{\hat{\varphi}}\|^2\|\rho\|^2_{\mathrm{H}^1} + \frac{1}{4}\|\nabla\mu_{\rho}\|^2  + C(\|\z\|^4_{\H^1} + \|\xi\|^2_{\mathrm{H}^1}\|\nabla\mu_{\xi}\|^2)\no\\
       & \leq C(\|\nabla\hat{\u}\|^2 + \|\nabla\hat{\varphi}\|^2)\|\y\|^2 + C \|\nabla\mu_{\hat{\varphi}}\|^2\|\rho\|^2_{\mathrm{H}^1} + \frac{1}{4}\|\nabla\mu_{\rho}\|^2 + C\|\eta\|^4_{\mathcal{U}}.
    \end{align}
   Now we will estimate the right-hand side of \eqref{test rho} as follows,
    \begin{align*}
    |J_1| & \leq \frac{1}{2}\|\y\|^2 + \frac{1}{2}\|\hat{\varphi}\|^2_{\mathrm{H}^3}\|\rho\|^2\\
    |J_2| & \leq \|\z\|_{\L^4}\|\nabla\xi\|\|\rho\|_{\mathrm{L}^4}\;
     \leq \frac{1}{2} \|\z\|^2_{\H^1}\|\nabla\xi\|^2 + \frac{1}{2} \|\rho\|^2_{\mathrm{H}^1}\\
    |J_3| & \leq \|\y\|\|\nabla\hat{\varphi}\|_{\mathrm{L}^\infty}\|\Delta\rho\|\; \leq C \|\hat{\varphi}\|^2_{\mathrm{H}^3}\|\y\|^2 + \frac{1}{8}\|\Delta\rho\|^2\\
    |J_4| & \leq C \|\hat{\u}\|^2_{\H^2}\|\nabla\rho\|^2 + \frac{1}{8}\|\Delta\rho\|^2\\
    |J_5| & \leq \|\z\|_{\L^4}\|\nabla\xi\|_{\mathrm{L}^4}\|\Delta\rho\|\; \leq C \|\z\|^2_{\H^1}\|\xi\|^2_{\mathrm{H}^2} + \frac{1}{8} \|\Delta\rho\|^2
\end{align*}
To estimate the sixth term of the right-hand side of \eqref{test rho}, from the Taylor series expansion of $F$ we note that,
\begin{align*}
    F'(\overline{\varphi}) - F'(\hat{\varphi}) - F''(\hat{\varphi})\psi = \frac{1}{2}F'''(\theta\overline{\varphi} + (1-\theta)\hat{\varphi})\xi^2 + F''(\hat{\varphi})\rho,
\end{align*}
for some $\theta \in (0, 1).$ Then, we can estimate the sixth term as 
\begin{align*}
    |J_6| & \leq C \|F'''(\theta\overline{\varphi} + (1-\theta)\hat{\varphi})\|_{\mathrm{L}^\infty}\|\xi^2\|\|\Delta\rho\| + \|F''(\hat{\varphi})\|_{\mathrm{L}^\infty}\|\rho\|\|\Delta\rho\|\no\\
    & \leq C \|\xi\|^4_{\mathrm{L}^4} + \frac{1}{8}\|\Delta\rho\|^2 + C\|\rho\|^2. 
\end{align*} 

The last term of \eqref{test rho} can be written as,
\begin{align}
  ( F'(\overline{\varphi}) - F'(\hat{\varphi}) - F''(\hat{\varphi})\psi, -\Delta\mu_{\rho}) & = \no\\ \Big(\frac{1}{2}F^{(4)}(\theta\overline{\varphi}+(1-\theta)\hat{\varphi})\nabla(\theta\overline{\varphi}+(1-\theta)\hat{\varphi})\xi^2 &+ F'''(\theta\overline{\varphi}+(1-\theta)\hat{\varphi})\xi\nabla\xi F'''(\hat{\varphi})\nabla\hat{\varphi}\rho + F''(\hat{\varphi})\nabla\rho, \nabla\mu_{\rho}\Big) \no\\
 &=: J_{7}^1 + J_{7}^{2} + J_{7}^{3} + J_{7}^{4},
\end{align}
where $J_{7}^1, \,  J_{7}^{2}, \, J_{7}^{3} , \, J_{7}^{4}$ can be estimated as follows
\begin{align*}
    |J_{7}^{1}| & \leq C \|F^{(4)}(\theta\overline{\varphi}+(1-\theta)\hat{\varphi})\|_{\mathrm{L}^\infty}(\|\nabla\hat{\varphi}\|_{\mathrm{L}^4} + \|\nabla\overline{\varphi}\|_{\mathrm{L}^4})\|\xi^2\|_{\mathrm{L}^4}\|\nabla\mu_{\rho}\|\no\\
    & \leq \frac{1}{8}\|\nabla\mu_{\rho}\|^2 + C(\|\nabla\hat{\varphi}\|^2_{\mathrm{L}^4} + \|\nabla\overline{\varphi}\|^2_{\mathrm{L}^4})\|\xi\|^4_{\mathrm{L}^8}\; \leq \frac{1}{8}\|\nabla\mu_{\rho}\|^2 + C(\|\nabla\hat{\varphi}\|^2_{\mathrm{L}^4} + \|\nabla\overline{\varphi}\|^2_{\mathrm{L}^4})\|\xi\|^4_{\mathrm{H}^1},\\
    |J_{7}^{2}| & \leq \| F'''(\theta\overline{\varphi}+(1-\theta)\hat{\varphi})\|_{\mathrm{L}^\infty}\|\xi\|_{\mathrm{L}^4}\|\nabla\xi\|_{\mathrm{L}^4}\|\nabla\mu_{\rho}\|\; \frac{1}{8}\|\nabla\mu_{\rho}\|^2 + C\|\xi\|^2_{\mathrm{L}^4}\|\nabla\xi\|^2_{\mathrm{L}^4},\\
    |J_{7}^{3}| & \leq \|F'''(\hat{\varphi})\|_{\mathrm{L}^\infty}\|\nabla\hat{\varphi}\|_{\mathrm{L}^4}\|\rho\|_{\mathrm{L}^4}\|\nabla\mu_{\rho}\|\; \leq C \|\nabla\hat{\varphi}\|^2_{\mathrm{L}^4}\|\rho\|^2_{\mathrm{H}^1} + \frac{1}{8}\|\nabla\mu_{\rho}\|^2,\\
    |J_{7}^{4}| & \leq \|F''(\hat{\varphi})\|_{\mathrm{L}^\infty}\|\nabla\rho\|\|\nabla\mu_{\rho}\| \; \leq C \|\nabla\rho\|^2 + \frac{1}{8}\|\nabla\mu_{\rho}\|^2.
\end{align*}
Substituting $J_1 \text{ to } J_7$ in \eqref{test rho} we get 
\begin{align}\label{equ rho}
    \frac{1}{2}\frac{d}{dt}\|\rho\|^2_{\mathrm{H}^1} + &\|\nabla\mu_{\rho}\|^2 +\frac{1}{2} \|\Delta\rho\|^2  \leq  \Big(\frac{1}{2} + C + C\|\hat{\varphi}\|^2_{\mathrm{H}^3} + \|\hat{\u}\|^2_{\H^2}\Big)\|\rho\|^2_{\mathrm{H}^1}  + C(\frac{1}{2} + \|\hat{\varphi}\|^2_{\mathrm{H}^3} )\|\y\|^2\no\\
    + &\frac{1}{2}\|\nabla\mu_{\rho}\|^2 +\Big(\frac{1}{2}\|\z\|^2_{\H^1}\|\nabla\xi\|^2 + C\|\z\|^2_{\H^1}\|\xi\|^2_{\mathrm{H}^2} + C\|\xi\|^4_{\mathrm{L}^4} + + C\|\xi\|^2_{\mathrm{L}^4}\|\nabla\xi\|^2_{\mathrm{L}^4}  \no\\
     & + C(\|\nabla\hat{\varphi}\|^2_{\mathrm{L}^4} + \|\nabla\overline{\varphi}\|^2_{\mathrm{L}^4})\|\xi\|^4_{\mathrm{H}^1}\Big)  \no\\
   \leq & \Big(\frac{1}{2} + C + C\|\hat{\varphi}\|^2_{\mathrm{H}^3} + \|\hat{\u}\|^2_{\H^2}\Big)\|\rho\|^2_{\mathrm{H}^1}  + C(\frac{1}{2} + \|\hat{\varphi}\|^2_{\mathrm{H}^3} )\|\y\|^2 + \frac{1}{2}\|\nabla\mu_{\rho}\|^2 \no\\ & +C(\|\nabla\hat{\varphi}\|^2_{\mathrm{L}^4} + \|\nabla\overline{\varphi}\|^2_{\mathrm{L}^4} + 3 + \frac{1}{2C})\|\eta\|^4_{\mathcal{U}}. 
\end{align}
Adding the inequalities \eqref{equ y} and \eqref{equ rho} we obtain
\begin{align}\label{resulting y rho}
   & \frac{1}{2}\frac{d}{dt}(\|\y\|^2 + \|\rho\|^2_{\mathrm{H}^1} )+ \frac{\nu}{2}\|\nabla\y\|^2 + \frac{1}{2}\|\Delta\rho\|^2 + \frac{1}{4}\|\nabla\mu_{\rho}\|^2 \no\\
 &\leq C\Big(\|\nabla\hat{\u}\|^2 + \|\nabla\hat{\varphi}\|^2 + 2\|\hat{\varphi}\|^2_{\mathrm{H}^3} + \|\nabla\mu_{\hat{\varphi}}\|^2
 + \|\nabla\hat{\u}\|^2_{\H^2} + 2\Big)(\|\y\|^2 + \|\rho\|^2_{\mathrm{H}^1}) \no\\
 &+ C\big(\|\nabla\hat{\varphi})\|^2_{\mathrm{L}^4} + \|\nabla\overline{\varphi}\|^2_{\mathrm{L}^4} +1 )\|\eta\|^4_{\mathcal{U}}.
\end{align}
As $(\hat{\u}, \hat{\varphi}) \text{ and } (\overline{\u}, \overline{\varphi})$ are strong solution of \eqref{equ P} with boundaries $M\Tilde{\hat{\h}} \text{ and } M\Tilde{\hat{\h}} + M\Tilde{\eta}$ respectively, so the pairs $(\hat{\u}, \hat{\varphi}) \text{ and } (\overline{\u}, \overline{\varphi})$ are uniformly bounded in $\mathcal{V}$ from Theorem \ref{strong solution}, i.e. 
\begin{align*}
    \|(\hat{\u}, \hat{\varphi})\|_{\mathcal{V}} &\leq M_1,\\
    \|(\overline{\u}, \overline{\varphi})\|_{\mathcal{V}} & \leq M_2.
\end{align*}
Therefore we can write \eqref{resulting y rho} as 
\begin{align}\label{y rho}
    \frac{1}{2}\frac{d}{dt}(\|\y\|^2 + \|\rho\|^2_{\mathrm{H}^1}) + \frac{\nu}{2})\|\nabla\y\|^2 + \frac{1}{2}\|\Delta\rho\|^2 + \frac{1}{4}\|\nabla\mu_{\rho}\|^2  \leq & C(\|\y\|^2 + \|\rho\|^2_{\mathrm{H}^1}) + C \|\eta\|^4_{\mathcal{U}},
\end{align} 
where $C$ depends on $\Tilde{C}, M_1, M_2, \Omega.$ Since the initial condition $(\y(0), \varphi(0)) = (0, 0)$, applying Gronwall lemma on \eqref{y rho} yields
\begin{align}
    \|(\y, \rho)\|^2_{\mathcal{W}} \leq C_T \|\eta\|^4_{\mathcal{U}},
\end{align}
Where $C_T$ depends on $M, M_1, M_2, \Omega, T$. Thus
\begin{align*}
    \lim_{\|\eta\|_{\mathcal{U}}\rightarrow 0}\frac{\|\mathcal{S}(\hat{\h}+\eta)-\mathcal{S}(\hat{\h})-\mathcal{S}'(\hat{\h})(\eta)\|_{\mathcal{W}}}{\|\eta\|_{\mathcal{U}}} = \lim_{\|\eta\|_{\mathcal{U}}\rightarrow 0}\frac{\|(\y, \rho)\|_{\mathcal{W}}}{\|\eta\|_{\mathcal{U}}} \leq C_T\|\eta\|_{\mathcal{U}} \rightarrow 0 \text{ as } \|\eta\|_{\mathcal{U}} \rightarrow 0.
\end{align*}
This completes the proof.
\end{proof}

\section{The First Order Necessary Optimality Condition}
Once we have shown that the control to state operator $\mathcal{S}$ is Fre\`chet differentiable, the next step is to derive the first-order necessary optimality condition. In this section, we establish the first-order necessary condition of optimality satisfied by the optimal solution and the solution of the linearized system \eqref{linearise equ}.

 \begin{theorem}\label{Optimality Condition by linear}
 Let the Assumption \ref{prop of F} be satisfied and $(\u_0, \varphi_0) \in \V_{div} \times \mathrm{H}^2$. Also suppose $(\u^\ast, \varphi^\ast, \h^\ast)$ be the  optimal triplet where $\h^\ast \in \mathcal{U}_{ad}$ and $S(\h^\ast) = (\u^\ast, \varphi^\ast).$  Let $(\w, \psi)$ be the unique weak solution of linearized problem \eqref{linearise equ} with  boundary data $M\Tilde{\h} - M\Tilde{\h^\ast}$,
 for any $\h \in \mathcal{U}$. Then following variational inequality holds:
 \begin{align}
     &\int_{Q}(\u^\ast - \u_{Q})\cdot \w dxdt + \int_{Q}(\varphi^\ast - \varphi_{Q})\cdot \psi dxdt + \int_{\Omega} (\u^\ast(T) - \u_{\Omega})\cdot \w(T) dx \no \\ & + \int_{\Omega} (\varphi^\ast(T) - \varphi_{\Omega})\cdot \psi(T) + \int_{\Sigma'} \h^\ast(\h - \h^\ast) dSdt \geq 0 \label{necessary condition}.
 \end{align}
 \end{theorem}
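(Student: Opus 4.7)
The plan is to derive the variational inequality from the definition of optimality by a standard convex perturbation argument, followed by an application of the chain rule using the Fréchet differentiability of $\mathcal{S}$ established in Theorem \ref{diffrentiability}.

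First I would exploit the convexity of the admissible set $\mathcal{U}_{ad}$. Given any $\h \in \mathcal{U}_{ad}$ and any $\lambda \in (0,1]$, convexity guarantees that
\begin{equation*}
\h^\ast + \lambda (\h - \h^\ast) = (1-\lambda)\h^\ast + \lambda \h \in \mathcal{U}_{ad}.
\end{equation*}
Since $\h^\ast$ minimizes $\tilde{\mathcal{J}}$ on $\mathcal{U}_{ad}$, it follows that
\begin{equation*}
\frac{\tilde{\mathcal{J}}(\h^\ast + \lambda(\h - \h^\ast)) - \tilde{\mathcal{J}}(\h^\ast)}{\lambda} \geq 0,
\end{equation*}
and letting $\lambda \to 0^+$ yields $\tilde{\mathcal{J}}'(\h^\ast)(\h - \h^\ast) \geq 0$ provided the right-hand directional derivative exists.

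Next I would apply the chain rule. Since $\tilde{\mathcal{J}} = \mathcal{J} \circ (\mathcal{S}, \mathrm{id}_{\mathcal{U}})$, and since $\mathcal{S}:\mathcal{U}\to\mathcal{W}$ is Fréchet differentiable at $\h^\ast$ by Theorem \ref{diffrentiability} with $\mathcal{S}'(\h^\ast)(\h - \h^\ast) = (\w, \psi)$ the unique weak solution of the linearized system \eqref{linearise equ} with boundary datum $\h - \h^\ast$, the differentiability of $\tilde{\mathcal{J}}$ reduces to that of $\mathcal{J}$ on $\mathcal{W}\times\mathcal{U}$. Because $\mathcal{J}$ is a continuous quadratic functional on $\mathcal{W}\times\mathcal{U}$ (the terminal evaluations at $t=T$ are well defined since $\mathcal{W}$ embeds into $C([0,T];\mathbb{L}^2_{div}) \times C([0,T];\mathrm{H}^1)$), a direct calculation gives, for the perturbation $\eta = \h - \h^\ast$,
\begin{align*}
\tilde{\mathcal{J}}'(\h^\ast)(\eta) = & \int_0^T (\u^\ast - \u_Q, \w)\, dt + \int_0^T(\varphi^\ast - \varphi_Q, \psi)\, dt \\
& + (\u^\ast(T) - \u_\Omega, \w(T)) + (\varphi^\ast(T) - \varphi_\Omega, \psi(T)) + \int_0^T (\h^\ast, \eta)_{\L^2(\partial\Omega)}\, dt.
\end{align*}
Combining the two inequalities yields \eqref{necessary condition} at once.

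The main technical obstacle is verifying rigorously that the chain rule applies at the level of composition $\mathcal{J}\circ\mathcal{S}$. Theorem \ref{diffrentiability} only gives differentiability of $\mathcal{S}$ into the weak space $\mathcal{W}$, so one must check that each summand of $\mathcal{J}$ is continuous and differentiable with respect to the $\mathcal{W}$-topology (not merely the stronger $\mathcal{V}$-topology). The distributed tracking terms are trivially continuous, but the terminal-time contributions require the $C([0,T];\cdot)$ components of $\mathcal{W}$, which are exactly what the space was defined to supply; one passes to the limit by writing
\begin{equation*}
\mathcal{J}(\mathcal{S}(\h^\ast+\lambda\eta),\h^\ast+\lambda\eta) - \mathcal{J}(\mathcal{S}(\h^\ast),\h^\ast) = \mathcal{J}'(\mathcal{S}(\h^\ast),\h^\ast)\cdot(\mathcal{S}(\h^\ast+\lambda\eta)-\mathcal{S}(\h^\ast),\lambda\eta) + o(\lambda),
\end{equation*}
then using $\mathcal{S}(\h^\ast+\lambda\eta)-\mathcal{S}(\h^\ast) = \lambda(\w,\psi) + o(\lambda)_{\mathcal{W}}$ from Theorem \ref{diffrentiability} to extract the limit. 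Dividing by $\lambda$ and sending $\lambda\to 0^+$ then gives the desired formula, and the final variational inequality follows.
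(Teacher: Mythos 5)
Your proposal is correct and follows essentially the same route as the paper: the paper invokes Lemma 2.21 of \cite{fredi_book} to get $\tilde{\mathcal{J}}'_{\h}(\u^\ast,\varphi^\ast,\h^\ast)(\h-\h^\ast)\geq 0$ over the convex set $\mathcal{U}_{ad}$ (you simply re-derive that lemma via the convex perturbation quotient), then applies the chain rule with $\mathcal{S}'(\h^\ast)(\h-\h^\ast)=(\w,\psi)$ from Theorem \ref{diffrentiability} and computes the Gateaux derivatives of the quadratic functional exactly as you do. Your additional verification that $\mathcal{J}$ is differentiable with respect to the weaker $\mathcal{W}$-topology, including the terminal-time terms via the $C([0,T];\cdot)$ components, is a point the paper leaves implicit but does not change the argument.
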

 
 \begin{proof}
 Since $\mathcal{U}_{ad}$ is a nonempty, convex subset of $\mathcal{U}$, then from Lemma 2.21 of \cite{fredi_book}, we have the reformulated cost functional $\Tilde{\mathcal{J}}$ satisfies 
 \begin{align}
     \tilde{\mathcal{J}}'_{\h}(\u^\ast, \varphi^\ast, \h^\ast)(\h - \h^\ast) \geq 0 \quad \forall \ \h \in \mathcal{U}_{ad},\label{1st nec cond}
 \end{align}
 where $\tilde{\mathcal{J}}'_{\h}(\u, \varphi, \h)$ denotes the Gateaux derivative of $\tilde{\mathcal{J}}$ with respect to $\h$.
 Now we will determine $\tilde{\mathcal{J}}'_{\h}(\u^\ast, \varphi^\ast, \h^\ast)$. We have from \eqref{reformulated} that $\tilde{\mathcal{J}}(\h) = \tilde{\mathcal{J}}(S(\h), \h)$, such that, $\mathcal{S}(\h) = (\u, \varphi)$ is the unique strong solution of \eqref{equ P} corresponding to control $\h$.
 Since $\tilde{\mathcal{J}}$ is a quadratic functional, by chain rule, we can write
 \begin{align}
\tilde{\mathcal{J}}'_{\h}(\u, \varphi, \h)= \tilde{\mathcal{J}}'_{\h}(S(\h), \h) = \tilde{\mathcal{J}}'_{S(\h)}(S(\h), \h) \circ S'(\h) + \tilde{\mathcal{J}}'_{\h}(S(\h), \h).\label{chain rule}
  \end{align}
In the above equation \eqref{chain rule}, we have for a fixed $\h \in \mathcal{U}$, the Gateaux derivative of $\tilde{\mathcal{J}}(S(\h), \h)$ with respect to $S(\h) = (\u,\varphi)$ and $\h$ are denoted by $\tilde{\mathcal{J}}'_{S(\h)}$ and $\tilde{\mathcal{J}}'_{\h}$, respectively. Now, we have the Gateaux derivative $\tilde{\mathcal{J}}'_{S(\h)}$ at $(\u^\ast, \varphi^\ast, \h^\ast)$ in the direction of  $(\y_1, y_2)$ is given by
   \begin{align}
       \tilde{\mathcal{J}}'_{S(h)}(S(\h^\ast), \h^\ast)(\y_1, y_2) = &\int_{Q}(\u^\ast - \u_{Q})\cdot\y_1 dx dt + \int_{Q}(\varphi^\ast - \varphi_{Q})y_2 dxdt \no \\ & + \int_{\Omega}(\u^\ast(T) - \u_{\Omega})\cdot \y_1(T) dx + \int_{\Omega}(\varphi(T) - \varphi_{\Omega}) y_2(T) dx,\label{1sr part chain}
   \end{align}
   for any $(\y_1, y_2) \in \mathcal{W}$.
   Similarly, we calculate the Gateaux derivative of $\tilde{\mathcal{J}}_{\h}$ at $(\u^\ast, \varphi^\ast, \h^\ast)$ in the direction of $\mathbf{g}$ as 
   \begin{align}
       \tilde{\mathcal{J}}'_{\h}(S(\h^\ast), \h^\ast)(\mathbf{g}) = \int_{\Sigma'}\h^\ast\cdot \mathbf{g} \, dSdt,\label{2nd part chain}
   \end{align}
   for any $\mathbf{g} \in \mathcal{U}$.
   Also from the Theorem \ref{diffrentiability}, we get that
   \begin{align}
       \mathcal{S}'(\h^\ast)(\h - \h^\ast) = (\w, \psi).\label{diff  c to s}
   \end{align}
   Now using \eqref{1sr part chain}, \eqref{2nd part chain}, \eqref{diff c to s} in \eqref{chain rule} we obtain,
   \begin{align*}
   \tilde{\mathcal{J}}'_{\h}(\u^\ast, \varphi^\ast, \h^\ast)(\h - \h^\ast) = &\int_{Q}(\u^\ast - \u_{Q})\cdot \w dxdt + \int_{Q}(\varphi^\ast - \varphi_{Q})\cdot \psi dxdt + \int_{\Omega} (\u^\ast(T) - \u_{\Omega})\cdot \w(T) dx \no \\ & + \int_{\Omega} (\varphi^\ast(T) - \varphi_{\Omega})\cdot \psi(T) \, dx + \int_{\Sigma'} \h^\ast\cdot(\h - \h^\ast) dSdt . 
   \end{align*}
   Therefore, we can conclude \eqref{necessary condition} from \eqref{1st nec cond}.
 \end{proof}
 \subsection{First Order Necessary Optimality Condition Via Adjoint System}
 
 In this section, we would like to simplify the optimality condition \eqref{necessary condition} and write it in terms of the optimal solution and adjoint variables. This optimality system can serve as the basis for computing approximations to optimal solutions numerically. Thus we will now derive the adjoint system corresponding to the system \eqref{equ P} by using the Lagrange multipliers method. We know that the adjoint system variables act as Lagrange multipliers corresponding to the state system variables \eqref{equ P}.
 
 It is well-known that the necessary optimality conditions satisfied by the optimal control can be derived from the exact Lagrange multipliers method, which is also known as Karush-Kuhn-Tucker (KKT) theory for optimization problems in Banach spaces. This method is described for various elliptic and parabolic problems in \cite{fredi_book, fursikov_book}. The application of the method for nonlinear PDEs is difficult because it requires a lot of experience in matching the operators, functionals, and spaces involved. Despite the technical difficulty, KKT theory has been applied to study optimal boundary control problems for Navier-Stokes equations in \cite{Fursikov_bndry, Fursikov_3dbcontl, Fursikov_3dbdary}, but it is not straight forward to use in our case for CHNS system  \eqref{equ P} since our system is highly nonlinear coupled pde system. Thus we will use a formal Lagrange multiplier method [see section 2.10 in \cite{fredi_book}] to derive the adjoint system, then prove the well-posedness of the adjoint system, and finally establish the necessary condition of optimality for our optimal control problem $(\mathbf{OCP})$. 
 
 For this purpose, we formally introduce the Lagrange functional for the control problem $(\mathbf{OCP})$ as follows:
 \begin{align}\label{lf}
     \mathcal{L}(( \u, \varphi), &\h, (\p, \zeta, \hat{P}, \p_1, \zeta_1)) \no\\
     :=  \mathcal{J}&(\u, \varphi, \h) -\int_{Q} [\mathbf{u}_t - \nu \Delta\mathbf{u} + (\mathbf{u}\cdot \nabla)\mathbf{u} + \nabla \pi - (-\Delta\varphi + F'(\varphi))\nabla \varphi]\cdot\p \, dxdt \no\\
     & -\int_{Q}[\varphi_t + \mathbf{u} \cdot \nabla \varphi- \Delta(-\Delta\varphi + F'(\varphi))]\cdot\zeta \, dxdt - \int_{Q} ( div~\mathbf{u})\hat{P} \, dxdt \no\\
     & - \int_{\Sigma} (\u-M\tilde\h)\cdot\p_1 \, dSdt - \int_{\Sigma} \frac{\partial\varphi}{\partial\n}\zeta_1 \,  dSdt 
 \end{align}
     for any $\h \in \mathcal{U}_{ad} \text{ and } (\u, \varphi) = \mathcal{S}(\h).$ Here $\zeta, \p, \hat{P}, \zeta_1, \p_1$ are Lagrange multipliers corresponding to five state constraints in \eqref{equ P} respectively.\\
     Let $(\u^\ast, \varphi^\ast, \h^\ast)$ be the optimal solution to the problem $(\mathbf{OCP})$ such that $(\u^\ast, \varphi^\ast) = \mathcal{S}(\h^\ast)$. Then by using the Lagrange principle, we conclude that $(\u^\ast, \varphi^\ast, \h^\ast)$ together with Lagrange multipliers $\p, \zeta, \hat{P}, \p_1, \zeta_1$, satisfies the first order optimality condition associated with the optimization problem related to the Lagrange functional $\mathcal{L}$, defined as follows:
     \begin{align*}
         \min _{\h \in \mathcal{U}_{ad}} \mathcal{L}((\u, \varphi), &\h, (\p, \zeta, \hat{P}, \p_1,\zeta_1)). 
     \end{align*}
     Now, since $(\u, \varphi)$ has become formally unconstrained, the Fr\'echet derivative of $\mathcal{L}$ with respect to $(\u, \varphi)$ will vanish at the optimal point $(\u^\ast, \varphi^\ast, \h^\ast)$, which implies
     \begin{align}\label{derivative lf}
          \mathcal{L}'_{(\u, \varphi)}((\u^\ast, \varphi^\ast), \h^\ast, (\p, \zeta, \hat{P}, \p_1, \zeta_1))(\u_1, u_2) = 0
     \end{align}
  for all smooth function $(\u_1, u_2)$ such that 
  \begin{align*}
      \u_1(0, .) = 0, \, \, u_2(0, .) = 0 \, \, \text { in } \Omega
  \end{align*}
Furthermore, from the Lagrange principle, the constraints on $\h$ in \eqref{ad control space} gives the following variational inequality
  \begin{align}\label{derivative lh}
          \mathcal{L}'_{\h}((\u^\ast, \varphi^\ast), \h^\ast, (\p, \zeta, \hat{P}, \p_1, \zeta_1))(\h-\h^\ast) \geq 0
     \end{align}
     for all $\h \in \mathcal{U}_{ad}$. \\
  Next, we want to determine the Lagrange multipliers $\p, \zeta, \hat{P}, \p_1, \zeta_1$ from \eqref{derivative lf}, which are the adjoint states corresponding to state equations \eqref{equ P}. For this purpose, we formally calculate \eqref{derivative lf}, then perform integration by parts and take the terms together corresponding to $(\u_1, u_2, \pi)$ to derive the following linear system satisfied by $(\p, \zeta, \hat{P})$:
\begin{equation}\label{adjoint equation}
\left\{
\begin{aligned}
     &-\partial_t\p - \nu \Delta\p + (\u^\ast \cdot \nabla)\p - (\p \cdot \nabla^{T})\u^\ast + \zeta\nabla\varphi^\ast - \nabla\hat{P} = \u^\ast - \u_{Q},  \quad \text{ in } Q,\\
     &-\partial_t\zeta - \u^\ast \cdot \nabla\zeta - \p \cdot \nabla(\Delta\varphi^\ast) + 
     \text{div}((\nabla\p) \cdot\nabla\varphi^\ast) + \text{div}((\nabla^T(\nabla\varphi^\ast))\cdot\p ) \\
     &+ \Delta^2\zeta - F''(\varphi^\ast)\Delta\zeta  = \varphi^\ast - \varphi_{Q}, \quad \text{ in } Q, \\
    & \text{div}~\p = 0,  \quad \text{ in } Q,\\  
    & \p = 0, \quad \text{ on } \Sigma,\\
    & \frac{\partial\zeta}{\partial\n} = 0 = \frac{\partial(\Delta\zeta)}{\partial\n}, \quad \text{ on } \Sigma,\\
    & \p(T) = \u^\ast(T)-\u_{\Omega}, \, \zeta(T) = \varphi^\ast(T) - \varphi_{\Omega}, \quad \text{ in } \Omega.
\end{aligned}   
\right.
\end{equation}
 Furthermore, on the boundary $\Sigma$, the two lagrange multipliers $\p_1, \ \zeta_1$ can be uniquely determined in terms of $(\p, \zeta, \hat{P})$ and  it satisfy the following equations:
 \begin{align}
     & \p_1 + \hat{P}\n + \frac{\partial\p}{\partial\n} = 0,  \quad \text{ on } \Sigma\label{bdry p}\\
     &\frac{\partial\zeta_1}{\partial\n} + [(\nabla\p) \cdot\nabla\varphi^\ast + (\nabla^T(\nabla\varphi^\ast))\cdot\p ]\n = 0,  \quad \text{ on } \Sigma. \label{bdry zeta}
 \end{align}
We call the linear pde system \eqref{adjoint equation} as the adjoint system corresponding to \eqref{equ P}.
\begin{remark}
   The expression of Lagrange functional $\mathcal{L}$ in \eqref{lf} is not well-defined yet because we only have the regularity of $(\u, \varphi) \in \mathcal{V}$ for the control $\h \in \mathcal{U}$. We do not know the regularity of Lagrange multipliers $\zeta, \p, \hat{P}, \zeta_1, \p_1$ yet. Thus the Lagrange multiplier method presented in this section has the sole purpose of identifying the correct form of the adjoint system. 
  \end{remark}
  Now we establish the following existence result of the adjoint system \eqref{adjoint equation}.
 \begin{theorem}\label{existence adj equ}
    Let $(\u^\ast, \varphi^\ast) \in \mathcal{V}$ and assumption \eqref{prop of F} on $F$ be satisfied. Also, assume $\u_\Omega \in \L^2_{div}(\Omega), \, \varphi_\Omega \in \mathrm{H}^1(\Omega)$. Then the linear problem \eqref{adjoint equation} has a unique solution $(\p, \zeta)$ such that
    \begin{align*}
        \p \in & \mathrm{L}^\infty(0, T; \G_{div})\cap\mathrm{L}^2(0, T; \V_{div})\cap\mathrm{H}^1(0, T, \V'_{div}),\\
        \zeta \in & \mathrm{L}^\infty(0, T; \mathrm{H}^1)\cap\mathrm{L}^2(0, T; \mathrm{H}^2\cap\mathrm{H}^3)\cap\mathrm{H}^1(0, T; (\mathrm{H}^1)').
    \end{align*}
 \end{theorem}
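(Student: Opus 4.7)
The backward-in-time character of the system \eqref{adjoint equation} would first be removed by the change of variable $s=T-t$: setting $\tilde{\p}(s,x)=\p(T-s,x)$ and $\tilde{\zeta}(s,x)=\zeta(T-s,x)$, one obtains a forward Cauchy problem with initial data $\tilde{\p}(0)=\u^{\ast}(T)-\u_{\Omega}\in\mathbb{L}^2_{\text{div}}$ and $\tilde{\zeta}(0)=\varphi^{\ast}(T)-\varphi_{\Omega}\in\mathrm{H}^1$, the reversed coefficients $\u^{\ast}(T-\cdot),\varphi^{\ast}(T-\cdot)$ retaining their full $\mathcal{V}$-regularity. A Faedo--Galerkin approximation in the spirit of Theorem \ref{lin equ} then applies: take $\p_n(s)=\sum_{i=1}^{n}a_i(s)\u_i$ with $\{\u_i\}$ the Stokes eigenbasis (so that $\p_n\big|_{\partial\Omega}=0$ is built in) and $\zeta_n(s)=\sum_{i=1}^{n}b_i(s)\gamma_i$ with $\{\gamma_i\}$ the Neumann Laplacian eigenbasis (which yields $\partial\zeta_n/\partial\n=0$ and, thanks to $\Delta\gamma_i=-\lambda_i\gamma_i$, also $\partial(\Delta\zeta_n)/\partial\n=0$). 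The resulting linear ODE system in $(a_i,b_i)$ has integrable-in-time coefficients, so Carath\'eodory theory provides a global solution.

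\textbf{Uniform energy estimates.} The main analytical task is to obtain bounds uniform in $n$. I would test the $\tilde{\p}$-equation with $\p_n$ and the $\tilde{\zeta}$-equation simultaneously with $\zeta_n$ and with $-\Delta\zeta_n$. The time derivatives produce $\tfrac12\tfrac{d}{ds}\bigl(\|\p_n\|^2+\|\zeta_n\|^2+\|\nabla\zeta_n\|^2\bigr)$; the convective couplings $(\u^{\ast}\cdot\nabla\p_n,\p_n)$ and $(\u^{\ast}\cdot\nabla\zeta_n,\zeta_n)$ vanish by the divergence-free property; the pressure $\nabla\hat{P}$ drops out against $\p_n$; and the biharmonic principal part contributes $(\Delta^2\zeta_n,\zeta_n)=\|\Delta\zeta_n\|^2$ together with $(\Delta^2\zeta_n,-\Delta\zeta_n)=\|\nabla\Delta\zeta_n\|^2$ via the compatible boundary conditions. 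The two divergence-type couplings in the $\zeta$-equation are handled by one integration by parts: the surface contribution of $\mathrm{div}((\nabla\p)\cdot\nabla\varphi^{\ast})$ vanishes because $\p_n\big|_{\partial\Omega}=0$ forces $\nabla\p_n\big|_{\partial\Omega}=(\partial_n\p_n)\otimes\n$, which combined with the Neumann condition $\n\cdot\nabla\varphi^{\ast}=0$ kills the integrand, whereas the surface contribution of $\mathrm{div}((\nabla^T(\nabla\varphi^{\ast}))\cdot\p)$ vanishes directly from $\p_n\big|_{\partial\Omega}=0$. The remaining interior products, together with the cross-couplings $\zeta\nabla\varphi^{\ast}$, $\p\cdot\nabla(\Delta\varphi^{\ast})$ and the lower-order term $F''(\varphi^{\ast})\Delta\zeta$, are controlled via H\"older, the Ladyzhenskaya inequality $\|\cdot\|_{\mathrm{L}^4}^2\lesssim\|\cdot\|\,\|\nabla\cdot\|$, Agmon and Gagliardo--Nirenberg inequalities in 2D, using the regularity $\u^{\ast}\in\mathrm{L}^{\infty}(0,T;\H^1_{\text{div}})\cap\mathrm{L}^2(0,T;\H^2_{\text{div}})$ and $\varphi^{\ast}\in\mathrm{L}^{\infty}(0,T;\mathrm{H}^2)\cap\mathrm{L}^2(0,T;\mathrm{H}^4)$ encoded in $\mathcal{V}$. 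The top-order pieces $\nu\|\nabla\p_n\|^2$, $\|\Delta\zeta_n\|^2$ and $\|\nabla\Delta\zeta_n\|^2$ are reabsorbed by Young's inequality, and Gr\"onwall's lemma then delivers uniform bounds on $\p_n$ in $\mathrm{L}^{\infty}(0,T;\G_{\text{div}})\cap\mathrm{L}^2(0,T;\V_{\text{div}})$ and on $\zeta_n$ in $\mathrm{L}^{\infty}(0,T;\mathrm{H}^1)\cap\mathrm{L}^2(0,T;\mathrm{H}^3)$. A comparison argument in the equations supplies $\partial_s\p_n\in\mathrm{L}^2(0,T;\V'_{\text{div}})$ and $\partial_s\zeta_n\in\mathrm{L}^2(0,T;(\mathrm{H}^1)')$.

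\textbf{Passage to the limit, uniqueness, and main obstacle.} Banach--Alaoglu and an Aubin--Lions compactness argument would then extract a subsequence of $(\p_n,\zeta_n)$ converging to a pair $(\tilde{\p},\tilde{\zeta})$ in the claimed spaces, with strong convergence in $\mathrm{L}^2(0,T;\G_{\text{div}})\times\mathrm{L}^2(0,T;\mathrm{H}^2)$. Since the adjoint equations are linear in $(\tilde{\p},\tilde{\zeta})$, passing to the limit in the weak formulation is routine; the adjoint pressure $\hat{P}$ is recovered by De Rham's theorem. Uniqueness is immediate from linearity: subtracting two solutions with identical data and re-running the same energy estimate gives a homogeneous Gr\"onwall inequality, forcing the difference to vanish. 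Reverting $s=T-t$ produces $(\p,\zeta)$ with the announced regularity. I expect the most delicate point to be the careful boundary bookkeeping for the divergence-type couplings in the $\zeta$-equation, whose cancellation relies simultaneously on $\p\big|_{\partial\Omega}=0$ and on the Neumann condition $\partial\varphi^{\ast}/\partial\n=0$, together with the requirement that the high-order coefficient $\nabla\Delta\varphi^{\ast}\in\mathrm{L}^2(0,T;\H^1)$ be paired only against factors whose regularity is consistent with the target space $\mathrm{L}^2(0,T;\mathrm{H}^3)$ for $\zeta$.
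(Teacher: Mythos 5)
Your proposal is correct and takes essentially the same route as the paper: a Faedo--Galerkin scheme modeled on Theorem \ref{lin equ}, a priori estimates obtained by testing with $\p$ and $\zeta-\Delta\zeta$ (your pair $\zeta_n$, $-\Delta\zeta_n$) using Ladyzhenskaya, Agmon and Young inequalities together with the $\mathcal{V}$-regularity of $(\u^\ast,\varphi^\ast)$, Gr\"onwall, time-derivative bounds by comparison in the equations, uniqueness from linearity, and recovery of $\hat{P}$ as in the Navier--Stokes theory. Your preliminary substitution $s=T-t$ is only a cosmetic repackaging of the paper's backward-in-time argument (which integrates from $t$ to $T$ before applying Gr\"onwall), and the details you add --- the choice of eigenbases enforcing $\partial(\Delta\zeta_n)/\partial\n=0$ and the cancellation of the boundary terms in the divergence-type couplings --- are precisely what the paper leaves implicit.
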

 \begin{proof}
The proof follows from a similar argument as in Theorem \ref{lin equ} using the Faedo - Galerkin approximation method. For the sake of simplicity, we omit the approximation scheme and do the apriori estimates only.\\
We multiply the equation $\eqref{adjoint equation}_1$ by $\p$ and the equation $\eqref{adjoint equation}_2$ by $\zeta - \Delta\zeta$ we get
 \begin{align}
     -\frac{1}{2}\frac{d}{dt}\|\p\|^2 + \nu \|\nabla\p\|^2 &= ( (\p\cdot\nabla^T)\u^\ast, \p ) + ( \zeta \nabla\varphi^\ast, \p ) + (\u^\ast-\u_{Q}, \p) \no\\
     & := \sum_{k=1}^{3}I_k \label{equ p}
     \end{align}
     \begin{align}
     -\frac{1}{2}\frac{d}{dt}\|\zeta\|^2_{\mathrm{H}^1} +& \|\Delta\zeta\|^2 + \|\nabla(\Delta\zeta)\|^2  = (\p \cdot \nabla(\Delta\varphi^\ast), \zeta) - (\text{div}((\nabla\p) \cdot\nabla\varphi^\ast), \zeta) \no\\
     & - (\text{div}((\nabla^T(\nabla\varphi^\ast))\cdot\p ), \zeta)+ (F''(\varphi^\ast)\Delta\zeta, \zeta) \; + (\varphi^\ast - \varphi_{Q}, \zeta)  - (\u^\ast \cdot \nabla\zeta, \Delta\zeta) + \no\\ 
     & +  (\p \cdot \nabla(\Delta\varphi^\ast), \Delta\zeta)  (\text{div}((\nabla\p) \cdot\nabla\varphi^\ast), \Delta\zeta) + (\text{div}((\nabla^T(\nabla\varphi^\ast))\cdot\p ), \Delta\zeta) \no \\
     &- (F''(\varphi^\ast)\Delta\zeta, \Delta\zeta) - (\varphi^\ast - \varphi_{Q}, \Delta\zeta) \no\\
     & := \sum_{i=1}^{11}J_i\label{equ zeta}
 \end{align}
 Now we estimate each $I_i$ and $J_i$ one by one using Poincare, Young's, and Sobolev inequality. We estimate $I_1$ using Ladyzhanskya and Young's inequality as 
 \begin{align*}
  |I_1| & \leq \|\p\|^2_{\L^4}\|\nabla\u^\ast\|\;
   \leq \sqrt{2}\|\p\|\|\nabla\p\|\|\nabla\u^\ast\| \;
   \leq \frac{\nu}{12}\|\nabla\p\|^2 + \frac{6}{\nu}\|\p\|^2\|\nabla\u^\ast\|^2.
 \end{align*}
 Similarly, we can estimate $I_2$ as 
 \begin{align*}
    |I_2| & \leq \|\nabla\zeta\|\|\varphi^\ast\|_{\mathrm{L}^4}\|\p\|_{\L^4}\;
     \leq \frac{C}{4}\|\nabla\zeta\|^2 + \frac{1}{4C}\|\varphi^\ast\|^2_{\mathrm{L}^4}\|\p\|\|\nabla\p\| \; \leq \frac{C}{4}\|\nabla\zeta\|^2 + \frac{\nu}{12}\|\nabla\p\|^2 + \frac{3}{16C\nu}\|\varphi^\ast\|^4_{\mathrm{L}^4}\|\p\|^2, \\
     |I_3| & \leq \frac{3}{4\nu}\|\u^\ast - \u_{Q}\|^2 + \frac{\nu}{12}\|\nabla\p\|^2
 \end{align*}
 Where $C$ is a generic constant. Adding $I_1 \text{ to } I_3$ in the equation \eqref{equ p} we get
 \begin{align}\label{total p}
    -\frac{1}{2}\frac{d}{dt}\|\p\|^2 + \frac{3\nu}{4} \|\nabla\p\|^2 &\leq \frac{3}{4\nu}\|\u^\ast-\u_Q\|^2 + \frac{C}{4}\|\nabla\zeta\|^2 + \Big(\frac{6}{\nu}\|\nabla\u^\ast\|^2 + \frac{3}{16C\nu}\|\varphi^\ast\|^4_{\mathrm{L}^4}\Big)\|\p\|^2.
 \end{align}
 Now we will estimate $J_i$.
 \begin{align*}
     |\J_1| & \leq \|\p\|\|\nabla\zeta\|\|\Delta\varphi^\ast\|_{\mathrm{L}^\infty} \; \leq \frac{1}{4C}\|\varphi^\ast\|^2_{\mathrm{H}^2}\|\p\|^2 + \frac{C}{4}\|\nabla\zeta\|,
 \end{align*}
 where we have used Agmon's inequality and Young's inequality. Similarly, $J_2$ can be estimated as 
\begin{align*}
    |J_2| & = |\big((\nabla\p) \cdot\nabla\varphi^\ast, \nabla\zeta\big)| \;
      \leq \frac{\nu}{12}\|\nabla\p\|^2 + \frac{6}{\nu}\|\varphi^\ast\|^2_{\mathrm{H}^3}\|\nabla\zeta\|^2.
\end{align*}
Similarly, 
\begin{align*}
    |J_3|  &= |\big((\nabla(\nabla\varphi^\ast))\cdot\p, \nabla\zeta\big)|  
\leq\|\p\|\|\nabla\zeta\|\|\nabla(\nabla\varphi^\ast)\|_{\mathrm{L}^\infty}\; \leq \|\varphi^\ast\|^2_{\mathrm{H}^4}\|\p\|^2 + \frac{1}{4}\|\nabla\zeta\|^2,\\
    |J_4| & \leq \|F''(\varphi^\ast)\|_{\mathrm{L}^\infty}\|\Delta\zeta\|\|\zeta\| 
           \leq C\|F''(\varphi^\ast)\|^2_{\mathrm{L}^\infty}\|\zeta\|^2 + \frac{1}{8}\|\Delta\zeta\|^2,\\
    |J_5| & \leq \frac{1}{2}\|\varphi^\ast - \varphi_{Q}\|^2 + \frac{1}{2}\|\zeta\|^2,\\
    |J_6| & \leq C\|\u^\ast\|^2_{\H^2}\|\nabla\zeta\|^2 + \frac{1}{8}\|\Delta\zeta\|^2,\\
    |J_7| & = |(\p \cdot \nabla(\Delta\zeta), \Delta\varphi^\ast) | \leq \frac{1}{12}\|\nabla(\Delta\zeta)\|^2 + \frac{3}{4}\|\Delta\varphi^\ast\|^2_{\mathrm{L}^\infty}\|\p\|^2,\\
    |J_8| &=  |(\text{div}( (\nabla\p) \cdot \nabla\varphi^\ast ), \Delta\zeta \, )| \leq \|\p\|_{\L^4}\|\nabla(\Delta\varphi^\ast)\|_{\mathrm{L}^4}\|\nabla(\Delta\zeta)\| \; \|\p\|^\frac{1}{2}\|\nabla\p\|^\frac{1}{2}\|\varphi^\ast\|^2_{\mathrm{H}^4}\|\nabla(\Delta\zeta)\|\\
          & \leq C\|\varphi^\ast\|^2_{\mathrm{H}^4}\|\p\|^2 + \frac{\nu}{12}\|\nabla\p\|^2 + \frac{1}{12}\|\nabla(\Delta\zeta)\|^2, \\
   |J_9| & =  |(\, (\nabla^T(\nabla\varphi^\ast))\cdot\p, \nabla(\Delta\zeta)\, )|\leq \frac{3}{4}\|\varphi^\ast\|^2_{\mathrm{H}^4} \|\p\|^2+ \frac{1}{12}\|\nabla(\Delta\zeta)\|^2,\\
    |J_{10}|  & \leq (C +1)\|\varphi^\ast\|^2_{\mathrm{H}^3}\|\nabla\zeta\|^2 + \frac{1}{5}\|\Delta\zeta\|^2 + \frac{1}{12}\|\nabla(\Delta\zeta)\|^2,\\
    |\J_{11}| & \leq \frac{1}{2}\|\varphi^\ast-\varphi_{Q}\|^2 + \frac{1}{8}\|\Delta\zeta\|^2.
\end{align*}
Taking estimates of $J_1 \text{ to } J_{4}$ into account, we write \eqref{equ zeta} as 
\begin{align}\label{total zeta}
    -\frac{1}{2}\frac{d}{dt}\|\zeta\|^2_{\mathrm{H}^1} + \frac{1}{2} \|\Delta\zeta\|^2 +\frac{2}{3}\|\nabla(\Delta\zeta)\|^2  \leq  \Big( \frac{1}{4C}\|\varphi^\ast\|^2_{\mathrm{H}^2} +(\frac{1}{4C} + \frac{5}{2} + C)\|\varphi^\ast\|^2_{\mathrm{H}^4} + 2\|\varphi^\ast\|^2_{\mathrm{H}^3}\Big)\|\p\|^2 \no\\ + \Big(\frac{6}{\nu} + \frac{C+1}{4} + C\|\F''(\varphi^\ast)\|^2_{\mathrm{L}^\infty} + C\|\u^\ast\|^2_{\H^2}\Big)\|\zeta\|^2_{\mathrm{H}^1}  + \frac{3}{4\nu}\|\u^\ast-\u_Q\|^2 +\frac{3}{8}\|\varphi^\ast - \varphi_{Q}\|^2 
\end{align}
Adding the inequalities \eqref{total p} and \eqref{total zeta} we get 
\begin{align}\label{sum p zeta}
    -\frac{1}{2}\frac{d}{dt}(\|\p\|^2 &+ \|\zeta\|^2_{\mathrm{H}^1}) + \frac{3\nu}{4} \|\nabla\p\|^2 + \frac{1}{2}  \|\Delta\zeta\|^2 +\frac{2}{3}\|\nabla(\Delta\zeta)\|^2 \leq \frac{3}{4\nu}\|\u^\ast-\u_Q\|^2 + \frac{1}{2}\|\varphi^\ast - \varphi_{Q}\|^2 \no\\
    & + \Big[\frac{6}{\nu}\|\nabla\u^\ast\|^2 +  \frac{3}{16C\nu}\|\varphi^\ast\|^4_{\mathrm{L}^4} + \frac{1}{4C}\|\varphi^\ast\|^2_{\mathrm{H}^2} +C\|\varphi^\ast\|^2_{\mathrm{H}^4} + 2\|\varphi^\ast\|^2_{\mathrm{H}^3}\Big]\|\p\|^2 \no\\&+  C\Big[1 + \|\F''(\varphi^\ast)\|^2_{\mathrm{L}^\infty} + \|\u^\ast\|^2_{\H^2}\Big]\|\zeta\|^2_{\mathrm{H}^1}.
\end{align}
Integrating the inequality \eqref{sum p zeta} from $t$ to $T$ yields
\begin{align}\label{int p zeta}
    &\frac{1}{2}(\|\p(t)\|^2 + \|\zeta(t)\|^2_{\mathrm{H}^1}) + \frac{3\nu}{4} \int_t^T\|\nabla\p(s)\|^2 ds + (\frac{1}{2}-C_3) \int_t^T\|\Delta\zeta(s)\|^2 ds +\frac{3}{4}\int_t^T\|\nabla(\Delta\zeta(s))\|^2 ds\no\\
    &\leq \big[\|\p(T)\|^2 + \|\zeta(T)\|^2 \big] + \frac{3}{4\nu}\int_t^T\|\u^\ast(s)-\u_Q\|^2 ds + \frac{1}{2}\int_t^T\|\varphi^\ast(s) - \varphi_{Q}\|^2 ds \no\\&
     +\int_t^T \Big[\frac{6}{\nu}\|\nabla\u^\ast(s)\|^2 +  \frac{3}{16C\nu}\|\varphi^\ast(s)\|^4_{\mathrm{L}^4} + \frac{1}{4C}\|\varphi^\ast(s)\|^2_{\mathrm{H}^2} +C\|\varphi^\ast(s)\|^2_{\mathrm{H}^4} + 2\|\varphi^\ast(s)\|^2_{\mathrm{H}^3}\Big]\|\p(s)\|^2 ds\no\\
    & + C\int_t^T \Big[1 + \|\F''(\varphi^\ast(s))\|^2_{\mathrm{L}^\infty} + \|\u^\ast(s)\|^2_{\H^2}\Big]\|\zeta(s)\|^2_{\mathrm{H}^1} ds,
\end{align}
 for all $t \in [0, T]$. Applying Gronwall's lemma to the inequality \eqref{int p zeta}
\begin{align}\label{gronwall}
    \frac{1}{2}(\|\p(t)\|^2 + \|&\zeta(t)\|^2_{\mathrm{H}^1}) \no\\
    \leq \Big[\|\p(T)\|^2& + \|\zeta(T)\|^2_{\mathrm{H}^1} + \frac{3}{4\nu}\int_0^T\|\u^\ast(s)-\u_Q\|^2 ds + \frac{1}{2}\int_0^T\|\varphi^\ast(s) - \varphi_{Q}\|^2 ds \Big]\no\\ \times exp\Big[T + &\int_0^T \|\F''(\varphi^\ast(s))\|^2_{\mathrm{L}^\infty} ds + \int_0^T\|\u^\ast(s)\|^2_{\H^2} ds\Big] \no\\
    \times exp\Big[\int_0^T&\frac{6}{\nu}\|\nabla\u^\ast(s)\|^2 ds+  \frac{3}{16C\nu}\int_0^T\|\varphi^\ast(s)\|^4_{\mathrm{L}^4} ds+ \frac{1}{4C}\int_0^T\|\varphi^\ast(s)\|^2_{\mathrm{H}^2}ds \no\\&+C\int_0^T\|\varphi^\ast(s)\|^2_{\mathrm{H}^4} ds+ 2\int_0^T\|\varphi^\ast(s)\|^2_{\mathrm{H}^3} ds\Big].
\end{align}
Since $(\u^\ast, \varphi^\ast)$ is the strong solution of nonlinear system \eqref{equ P}, the right-hand side of \eqref{gronwall} is finite. Therefore using \eqref{int p zeta} and \eqref{gronwall} we conclude that $\p \in \mathrm{L}^\infty(0, T; \G_{div}) \cap \mathrm{L}^2(0, T; \V_{div})$ and $\zeta \in \mathrm{L}^\infty(0, T; \mathrm{H}^1) \cap \mathrm{L}^2(0, T; \mathrm{H}^2\cap \mathrm{H}^3).$ Now with this regularity of $\p$, $\zeta$ and using the equations $\eqref{adjoint equation}_1$ and $\eqref{adjoint equation}_2$ we get a uniform estimates on time derivatives $\p_t, \, \zeta_t$ so that $\p_t \in \mathrm{L}^2(0, T; \V'_{div}), \, \zeta_t \in \mathrm{L}^2(0, T; (\mathrm{H}^1)').$ Hence we get a weak solution $(\p, \zeta)$ as claimed. The uniqueness of weak solutions follows from the linearity of the system. Since $(\p, \zeta)$ is the unique solution of the adjoint system, as in the Navier - Stokes equations (see \cite{Temam}), we can determine $\hat{P} \in \mathrm{L}^2(0, T; \mathrm{L}^2_{0}(\Omega))$, where $\mathrm{L}^2_{0}(\Omega) = \{ g \in \mathrm{L}^2(\Omega):\int_{\Omega} g(x) dx = 0\}$.
\end{proof}

\begin{remark}\label{reg adj}
Note that, if $\u_\Omega \in \H^1_{div}$ then solution of the adjoint system, $\p$ satisfy $\p \in \mathrm{L}^\infty(0, T; \V_{div}) \cap \mathrm{L}^2(0, T; \H^2)$. This can be shown by multiplying the equation $\eqref{adjoint equation}_1$ by $\mathbf{A}\p$ and proceeding similarly as in Theorem \ref{existence adj equ}.       
\end{remark}
\begin{corollary}\label{exist bdry multiplier}
Let $\u_\Omega\in \H^1_{div}$ and all the assumptions of Theorem \ref{existence adj equ} hold. Then the Lagrange multipliers $(\p_1, \zeta_1)$ are uniquely determined by the equations \eqref{bdry p} and \eqref{bdry zeta} such that
\begin{align}\label{reg p zeta}
    \p_1 \in  \mathrm{L}^2(0, T; \H^{\frac{1}{2}}(\Gamma)),
    \zeta_1 \in  \mathrm{L}^2(0, T; \mathrm{H}^{\frac{1}{2}}(\Gamma)).
\end{align}
\end{corollary}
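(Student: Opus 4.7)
My plan is to solve the boundary relations \eqref{bdry p} and \eqref{bdry zeta} directly for $\p_1$ and $\zeta_1$, using the improved adjoint regularity granted by the Remark preceding this corollary. Under the hypothesis $\u_\Omega \in \H^1_{div}$, that Remark gives $\p \in \mathrm{L}^\infty(0,T;\V_{div}) \cap \mathrm{L}^2(0,T;\H^2_{div})$, which lets me first upgrade the pressure-type multiplier $\hat{P}$ to $\mathrm{H}^1$-regularity in $\Omega$ and then read $\p_1$ off \eqref{bdry p}, while $\zeta_1$ will be obtained by inverting \eqref{bdry zeta} as a Neumann-type boundary problem.

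For $\p_1$, I rewrite \eqref{bdry p} as $\p_1 = -\hat{P}\n - \partial\p/\partial\n$. The trace theorem applied to $\nabla\p \in \mathrm{L}^2(0,T;\H^1)$ gives $\partial\p/\partial\n \in \mathrm{L}^2(0,T;\H^{1/2}(\partial\Omega))$. To control $\hat{P}$, I apply the Leray projector $\mathrm{P}$ to $\eqref{adjoint equation}_1$, which kills $\nabla\hat{P}$; every remaining term lies in $\mathrm{L}^2(0,T;\mathbb{L}^2(\Omega))$ by the strong-solution bounds on $(\u^\ast,\varphi^\ast) \in \mathcal{V}$, the regularity of $\zeta$ from Theorem \ref{existence adj equ}, and the data assumption $\u_Q \in \mathrm{L}^2(0,T;\mathbb{L}^2_{div})$. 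Since $\p$ is divergence-free, $\partial_t\p = \mathrm{P}\partial_t\p \in \mathrm{L}^2(0,T;\mathbb{L}^2)$; substituting back into $\eqref{adjoint equation}_1$ yields $\nabla\hat{P} \in \mathrm{L}^2(0,T;\mathbb{L}^2)$, so a standard de Rham argument gives $\hat{P} \in \mathrm{L}^2(0,T;\mathrm{H}^1(\Omega))$ with the zero-mean normalization. The trace $\hat{P}|_{\partial\Omega} \in \mathrm{L}^2(0,T;\mathrm{H}^{1/2}(\partial\Omega))$ then produces $\p_1 \in \mathrm{L}^2(0,T;\H^{1/2}(\partial\Omega)) \hookrightarrow \mathrm{L}^2(0,T;\H^{-1/2}(\partial\Omega))$, and uniqueness is immediate since \eqref{bdry p} is a pointwise identity.

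For $\zeta_1$, I first observe that the adjoint boundary condition $\p = 0$ on $\Sigma$ makes $(\nabla^T(\nabla\varphi^\ast))\cdot\p$ vanish on $\partial\Omega$, so \eqref{bdry zeta} reduces to $\partial\zeta_1/\partial\n = -((\nabla\p)\cdot\nabla\varphi^\ast)\cdot\n$. From $\nabla\p|_{\partial\Omega} \in \mathrm{L}^2(0,T;\H^{1/2}(\partial\Omega))$ and $\nabla\varphi^\ast|_{\partial\Omega} \in \mathrm{L}^2(0,T;\mathrm{H}^{5/2}(\partial\Omega)) \hookrightarrow \mathrm{L}^2(0,T;\mathrm{C}^1(\partial\Omega))$, the pointwise product lies in $\mathrm{L}^2(0,T;\mathrm{H}^{1/2}(\partial\Omega)) \hookrightarrow \mathrm{L}^2(0,T;\mathrm{H}^{-1/2}(\partial\Omega))$. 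Interpreting \eqref{bdry zeta} as the Neumann boundary condition for a lift of $\zeta_1$ inside $\Omega$, and enforcing the zero-mean compatibility through the variational structure of the Lagrangian, the classical elliptic Neumann inversion produces a unique $\zeta_1 \in \mathrm{L}^2(0,T;\mathrm{H}^1(\Omega))$, whose trace belongs to $\mathrm{L}^2(0,T;\mathrm{H}^{1/2}(\partial\Omega))$. The main obstacle is the de Rham-type upgrade of $\hat{P}$ from $\mathrm{L}^2$ to $\mathrm{H}^1$: Theorem \ref{existence adj equ} only placed $\hat{P}$ in $\mathrm{L}^2(0,T;\mathrm{L}^2_0(\Omega))$, and the upgrade crucially uses both the strengthened regularity of $\p$ from the Remark and the temporal regularity of $\partial_t\p$ recovered via the Leray projector; a secondary subtlety is fixing the unambiguous meaning of $\partial\zeta_1/\partial\n$, resolved by working with the Neumann lift on $\Omega$.
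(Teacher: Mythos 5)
Your overall strategy is sound and in places sharper than the paper's, but it diverges from the paper's route in both halves. For $\p_1$, the paper does not upgrade $\hat{P}$ at all: it keeps $\hat{P} \in \mathrm{L}^2(0,T;\mathrm{L}^2_0(\Omega))$ from Theorem \ref{existence adj equ} and simply invokes the trace theorem to place $\partial\p/\partial\n$ and $\hat{P}\n$ in $\mathrm{L}^2(0,T;\H^{-\frac{1}{2}}(\partial\Omega))$, then reads $\p_1$ off \eqref{bdry p}. Your Leray-projection/de Rham upgrade of the pressure to $\mathrm{L}^2(0,T;\mathrm{H}^1(\Omega))$ is legitimate given the Remark's regularity $\p\in\mathrm{L}^\infty(0,T;\V_{div})\cap\mathrm{L}^2(0,T;\H^2)$ (all projected terms do land in $\mathrm{L}^2(0,T;\mathbb{L}^2)$ by the bounds on $(\u^\ast,\varphi^\ast)\in\mathcal{V}$ and on $\zeta$), and it buys you a classical trace for $\hat{P}$ and the stronger conclusion $\p_1\in\mathrm{L}^2(0,T;\H^{\frac{1}{2}}(\partial\Omega))$, whereas the paper's shorter version implicitly leans on a generalized normal trace for a merely $\mathrm{L}^2$ pressure. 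For $\zeta_1$, the paper's key device is the identity $(\nabla\p)\cdot\nabla\varphi^\ast+(\nabla^T(\nabla\varphi^\ast))\cdot\p=\nabla(\p\cdot\nabla\varphi^\ast)$ together with an \emph{interior} Gagliardo--Nirenberg estimate showing $\p\cdot\nabla\varphi^\ast\in\mathrm{L}^2(0,T;\mathrm{H}^1(\Omega))$, from which the normal component of this gradient field lies in $\mathrm{L}^2(0,T;\mathrm{H}^{-\frac{1}{2}}(\partial\Omega))$ and the trace correspondence gives $\zeta_1\in\mathrm{L}^2(0,T;\mathrm{H}^{\frac{1}{2}}(\partial\Omega))$. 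You instead drop the second term using $\p=0$ on $\Sigma$ --- a correct observation the paper does not exploit --- and estimate the product of boundary traces directly.

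That last step is where your argument has a genuine gap: you multiply $\nabla\p|_{\partial\Omega}\in\mathrm{L}^2(0,T;\H^{\frac{1}{2}}(\partial\Omega))$ against $\nabla\varphi^\ast|_{\partial\Omega}\in\mathrm{L}^2(0,T;\mathrm{H}^{\frac{5}{2}}(\partial\Omega))$ and assert the product is again $\mathrm{L}^2$ in time, but the product of two functions each only square-integrable in time is merely $\mathrm{L}^1(0,T)$-integrable, so the claimed membership in $\mathrm{L}^2(0,T;\mathrm{H}^{\frac{1}{2}}(\partial\Omega))$ does not follow as written. To restore it you must pair an $\mathrm{L}^2_t$ factor with an $\mathrm{L}^\infty_t$ (or interpolated $\mathrm{L}^p_t$) factor --- e.g.\ exploit $\varphi^\ast\in\mathrm{L}^\infty(0,T;\mathrm{H}^2)\cap\mathrm{L}^2(0,T;\mathrm{H}^4)$ and $\p\in\mathrm{L}^\infty(0,T;\V_{div})\cap\mathrm{L}^2(0,T;\H^2)$, interpolating both in time and trading trace regularity --- or simply follow the paper and perform the estimate in the interior, where $\|\p\cdot\nabla\varphi^\ast\|_{\mathrm{H}^1}$ is controlled by $\|\p\|^{\frac{3}{2}}_{\H^2}\|\p\|^{\frac{1}{2}}\|\nabla\varphi^\ast\|_{\mathrm{L}^4}$ plus $\|\nabla\p\|\,\|\varphi^\ast\|_{\mathrm{H}^3}$-type terms whose time integrability is exactly what the Remark supplies. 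A smaller point: your Neumann-lift reading of \eqref{bdry zeta} matches the paper's implicit interpretation (it likewise passes from $\partial\zeta_1/\partial\n\in\mathrm{H}^{-\frac{1}{2}}$ to $\zeta_1\in\mathrm{H}^{\frac{1}{2}}$ via the trace correspondence), but Neumann inversion determines $\zeta_1$ only up to an additive constant and requires a compatibility condition on the datum; your appeal to ``the variational structure of the Lagrangian'' should be replaced by an explicit normalization (say, zero mean), a point the paper glosses over as well.
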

\begin{proof}
  Since $(\p,\zeta,\hat{P})$ is the unique solution of the adjoint system \eqref{adjoint equation}, we can determine $(\p_1, \zeta_1)$ from the equations \eqref{bdry p} and \eqref{bdry zeta} uniquely on the boundary $\Sigma$. We only need to show \eqref{reg p zeta}. From \eqref{reg adj} and the trace theorem we have $\frac{\partial\p}{\partial\n} \in \mathrm{L}^2(0, T; \H^{\frac{1}{2}}(\Gamma))$ and $\hat{P}\n \in \mathrm{L}^2(0, T; \mathrm{H}^{\frac{1}{2}}(\Gamma))$. Therefore from \eqref{bdry p}, we have $\p_1 \in \mathrm{L}^2(0, T; \H^{\frac{1}{2}}(\Gamma))$. Moreover, using Gagliardo - Nirenberg interpolation inequality we obtain
  \begin{align*}
      \|\p\cdot\nabla\varphi^\ast\|^2_{\mathrm{H}^1} = \Big(\sum_{|\alpha| \leq 1}\int_\Omega|D^\alpha(\p\cdot\nabla\varphi^\ast)|^2\Big)\;  &\leq \sum_{|\alpha| \leq 1}\Big( \int_\Omega |D^\alpha\p \cdot \nabla\varphi^\ast|^2 + \int_\Omega|\p \cdot D^\alpha(\nabla\varphi^\ast)|^2 \Big)\\
      &\leq \|\p\|^2_{\W^{1,4}}\|\nabla\varphi^\ast\|^2_{\mathrm{L}^4} + \|\p\|^2_{\mathrm{L}^4}\|\varphi^\ast\|^2_{\W^{2,4}}\\
      & \leq C \|\p\|^{\frac{3}{2}}_{\H^2}\|\p\|^\frac{1}{2}_{\L^2} \|\nabla\varphi^\ast\|_{\mathrm{L}^4} + C\|\nabla\p\|^2\|\varphi^\ast\|^{\frac{5}{3}}_{\mathrm{H}^3}\|\varphi^\ast\|^\frac{1}{3}_{\mathrm{H}^1}.
  \end{align*}
  As a consequence, using the fact $(\u^\ast, \varphi^\ast)\in \mathcal{V}$ we obtain
  \begin{align*}
      \p\cdot\nabla\varphi^\ast \in \mathrm{L}^2(0, T; \mathrm{H}^1(\Omega)).
  \end{align*}
 Therefore, $(\nabla\p) \cdot\nabla\varphi^\ast + (\nabla^T(\nabla\varphi^\ast))\cdot\p = \nabla(\p\cdot\nabla\varphi^\ast) \in \mathrm{L}^2(0, T; \mathrm{L}^2(\Omega)).$ Which implies 
 \begin{align*}
   [(\nabla\p) \cdot\nabla\varphi^\ast + (\nabla^T(\nabla\varphi^\ast))\cdot\p]\n \in \mathrm{L}^2(0, T; \mathrm{H}^{-\frac{1}{2}}(\Gamma)).  
 \end{align*}
 Therefore, it follows from \eqref{bdry zeta} that
 \begin{align*}
     \frac{\partial\zeta_1}{\partial\n} \in \mathrm{L}^2(0, T; \mathrm{H}^{-\frac{1}{2}}(\Gamma)),
 \end{align*}
 and hence from Trace theorem,
 \begin{align*}
     \zeta_1 \in \mathrm{L}^2(0, T; \mathrm{H}^\frac{1}{2}(\Gamma)).
 \end{align*}
\end{proof}

\begin{remark}
    Using the Theorem \ref{existence adj equ} and Corollary \ref{exist bdry multiplier}, we can now conclude that the Lagrange functional $\mathcal{L}$ defined in \eqref{lf} is well-defined.
\end{remark}
In the following theorem, we will derive the first-order necessary optimality condition in terms of optimal solution of $(\mathbf{OCP})$ and the adjoint variables $\p, \zeta$.

\begin{theorem}\label{1st order necessary optimality condition}
 Let the Assumption \ref{prop of F} holds and  $(\u_0, \varphi_0) \in \V_{div} \times \mathrm{H}^2$. In addition, let $(\u^\ast, \varphi^\ast,\h^\ast )$ be the optimal triplet, i.e., $\h^\ast \in \mathcal{U}_{ad}$ be the optimal boundary control of the problem ($\mathbf{OCP}$) and $(\u^\ast, \varphi^\ast)$ be the strong solution of \eqref{equ P} corresponding to $\h^\ast$. Let $(\p, \zeta)$ be the solution of the adjoint system \eqref{adjoint equation}.  Then for any $\h \in \mathcal{U}_{ad}$, the following variational inequality holds:
 \begin{align}\label{variational ineqa}
      \int_{\Sigma'} \h^\ast(\h - \h^\ast) \, dSdt - \int_{\Sigma} M\hat{P}\n\cdot(\h - \h^\ast) \, dSdt - \int_{\Sigma} M\frac{\partial\p}{\partial\n}\cdot(\h - \h^\ast) \, dSdt \geq 0.
 \end{align}
\end{theorem}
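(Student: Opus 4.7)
The plan is to convert the variational inequality of Theorem \ref{Optimality Condition by linear},
\begin{align*}
&\int_{Q}(\u^{\ast}-\u_{Q})\cdot\w\,dxdt+\int_{Q}(\varphi^{\ast}-\varphi_{Q})\psi\,dxdt+\int_{\Omega}(\u^{\ast}(T)-\u_{\Omega})\cdot\w(T)\,dx\\
&\qquad+\int_{\Omega}(\varphi^{\ast}(T)-\varphi_{\Omega})\psi(T)\,dx+\int_{\Sigma}\h^{\ast}\cdot(\h-\h^{\ast})\,dSdt\ge 0,
\end{align*}
into the claimed inequality \eqref{variational ineqa} by establishing the duality identity
\begin{align*}
&\int_{Q}(\u^{\ast}-\u_{Q})\cdot\w+\int_{Q}(\varphi^{\ast}-\varphi_{Q})\psi+\int_{\Omega}(\u^{\ast}(T)-\u_{\Omega})\cdot\w(T)+\int_{\Omega}(\varphi^{\ast}(T)-\varphi_{\Omega})\psi(T)\\
&\qquad=-\int_{\Sigma}\hat{P}\,\n\cdot(\h-\h^{\ast})\,dSdt-\int_{\Sigma}\frac{\partial\p}{\partial\n}\cdot(\h-\h^{\ast})\,dSdt,
\end{align*}
where $(\w,\psi)$ is the weak solution of the linearized system \eqref{linearise equ} around $(\u^{\ast},\varphi^{\ast})$ with boundary datum $\eta:=\h-\h^{\ast}$ supplied by Theorem \ref{lin equ}. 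Substituting this identity into the inequality above immediately yields \eqref{variational ineqa}.

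To prove the duality identity I would test $\eqref{adjoint equation}_{1}$ against $\w$ and $\eqref{adjoint equation}_{2}$ against $\psi$, and in parallel test the two linearized equations in \eqref{linearise equ} against $\p$ and $\zeta$ respectively; integrating over $Q$ and forming the appropriate difference, the time IBP together with the terminal data $\p(T)=\u^{\ast}(T)-\u_{\Omega}$, $\zeta(T)=\varphi^{\ast}(T)-\varphi_{\Omega}$ and the initial data $\w(0)=\psi(0)=0$ reproduces exactly the two terminal integrals of Theorem \ref{Optimality Condition by linear}. All interior volume contributions then cancel pairwise by the very construction of the adjoint: transport cancels after an IBP using $\mathrm{div}\,\u^{\ast}=0$ and $\p|_{\Sigma}=0$; the linear coupling $-(\p\cdot\nabla^{T})\u^{\ast}$ is the algebraic dual of $(\w\cdot\nabla)\u^{\ast}$; the Cahn--Hilliard cross terms $\zeta\nabla\varphi^{\ast}$, $\p\cdot\nabla(\Delta\varphi^{\ast})$, $\mathrm{div}((\nabla\p)\cdot\nabla\varphi^{\ast})$ and $\mathrm{div}((\nabla^{T}\nabla\varphi^{\ast})\p)$ match the linearized forcing $-\Delta\varphi^{\ast}\nabla\psi-\Delta\psi\nabla\varphi^{\ast}+F'(\varphi^{\ast})\nabla\psi+F''(\varphi^{\ast})\psi\nabla\varphi^{\ast}$ after an IBP that uses $\p|_{\Sigma}=0$ and $\mathrm{div}\,\p=0$; and the biharmonic block $\Delta^{2}\zeta-F''(\varphi^{\ast})\Delta\zeta$ is dual to $\Delta\mu_{\psi}$ after two Green identities closed up by $\partial_{\n}\psi=\partial_{\n}\zeta=\partial_{\n}\mu_{\psi}=\partial_{\n}(\Delta\zeta)=0$. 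What survives is the surface term from $-\nu\Delta\p$ against $\w$, yielding $-\int_{\Sigma}\frac{\partial\p}{\partial\n}\cdot(\h-\h^{\ast})\,dSdt$ in the normalisation of \eqref{bdry p}, together with $-\nabla\hat{P}$ against $\w$, yielding $-\int_{\Sigma}\hat{P}\,\n\cdot(\h-\h^{\ast})\,dSdt$ via $\mathrm{div}\,\w=0$ and $\w|_{\Sigma}=\h-\h^{\ast}$.

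The main obstacle will be the careful term-by-term bookkeeping of the coupled Navier--Stokes/Cahn--Hilliard duality: each cross term in \eqref{adjoint equation} has to be matched, after IBP, against the corresponding contribution from \eqref{linearise equ} using only the boundary data actually available on $\p$, $\zeta$, $\mu_{\psi}$ and $\Delta\zeta$, so that no uncontrolled surface integral is generated. The regularity supplied by Theorem \ref{strong sol} for $(\u^{\ast},\varphi^{\ast})$, by Theorem \ref{lin equ} for $(\w,\psi)$, by Theorem \ref{existence adj equ} for $(\p,\zeta,\hat{P})$ and by Corollary \ref{exist bdry multiplier} for the surface traces makes every one of these integration by parts rigorous and every surface integral well-defined, which legitimises the duality identity and hence the theorem.
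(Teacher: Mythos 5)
Your proposal is correct, but it is not the route the paper's own proof takes: the paper proves Theorem \ref{1st order necessary optimality condition} in two lines from the Lagrange principle, computing $\mathcal{L}'_{\h}$ in \eqref{derivative lh} to get $\int_{\Sigma}(\h^\ast+\p_1)\cdot(\h-\h^\ast)\,dSdt\geq 0$ and then substituting $\p_1$ from \eqref{bdry p}. Your argument --- combining the variational inequality \eqref{necessary condition} of Theorem \ref{Optimality Condition by linear} with a duality identity obtained by pairing the adjoint system \eqref{adjoint equation} against the linearized solution $(\w,\psi)$ and the linearized system \eqref{linearise equ} against $(\p,\zeta)$ --- is precisely the alternative the authors sketch in the remark immediately following the theorem, where the identity equating the four tracking terms to $\int_{\Sigma}\p_1\cdot(\h-\h^\ast)\,dSdt$ is stated without proof. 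What your route buys is rigor: the Lagrangian computation is formal (the paper itself says the multiplier method ``has the sole purpose of identifying the correct form of the adjoint system''), whereas your derivation rests only on proven facts --- Theorem \ref{Optimality Condition by linear}, well-posedness of the linearized system (Theorem \ref{lin equ}) and of the adjoint system (Theorem \ref{existence adj equ}, Corollary \ref{exist bdry multiplier}) --- at the cost of the term-by-term bookkeeping you describe; the paper's route is shorter but inherits the formal character of the Lagrange principle, justified only a posteriori by those same well-posedness results. Two small points in executing your plan: since $\zeta$ is only in $\mathrm{L}^2(0,T;\mathrm{H}^3)$ and $\psi$ in $\mathrm{L}^2(0,T;\mathrm{H}^2)$, the biharmonic pairing should be stopped at the level $(\nabla(\Delta\zeta),\nabla\psi)$ using $\frac{\partial(\Delta\zeta)}{\partial\n}=0$ rather than written as $(\Delta^2\zeta,\psi)$; and the viscous surface term emerges from Green's identity as $-\nu\int_{\Sigma}\frac{\partial\p}{\partial\n}\cdot(\h-\h^\ast)\,dSdt$, so the factor $\nu$ must be absorbed into the normalization of \eqref{bdry p} (which the paper writes without $\nu$), exactly as you flagged --- neither point affects the validity of your argument.
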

\begin{proof}
We have from \eqref{derivative lh} that 
\begin{align*}
          \mathcal{L}'_{\h}((\u^\ast, \varphi^\ast), \h^\ast, (\p, \zeta, \hat{P}, \p_1, \zeta_1))(\h-\h^\ast) \geq 0
     \end{align*}
     for all $\h \in \mathcal{U}_{ad}$.
     A direct computation of \eqref{derivative lh} leads to the following inequality
    \begin{align}\label{var in}
        \int_{\Sigma'}\h^\ast \cdot(\h - \h^\ast) + \int_{\Sigma} M^\ast\p_1 \cdot(\h - \h^\ast) \, dSdt \geq 0.
    \end{align}
    where $M^\ast$ is the adjoint of $M$ and from [\cite{raymond_feedback}, Lemma 2.4], we have $M= M^\ast$.
    Now substituting the value of $\p_1$ from \eqref{bdry p} in \eqref{var in} we get 
    \begin{align*}
      \int_{\Sigma'} \h^\ast(\h - \h^\ast) \, dSdt - \int_{\Sigma} M\hat{P}\n\cdot(\h - \h^\ast) \, dSdt - \int_{\Sigma} M\frac{\partial\p}{\partial\n}\cdot(\h - \h^\ast) \, dSdt \geq 0, \quad \h \in \mathcal{U}_{ad}.
 \end{align*}
 That completes the proof.
\end{proof}

\begin{remark}
    We can also prove optimality condition \eqref{variational ineqa} in Theorem \ref{1st order necessary optimality condition} from equation \eqref{necessary condition} in Theorem \ref{Optimality Condition by linear}. For this, we take $(\w, \psi)$ to be the unique weak solution of linearized system \eqref{linearise equ} corresponding to boundary $M\Tilde{\eta} = M(\Tilde{\h} - \Tilde{\h^\ast})$, for any $\h \in \mathcal{U}_{ad}$. Then from the optimal condition \eqref{derivative lf}, by taking $(\u_1,u_2)$ as the linearized solution $(\w,\psi)$ and using the fact that $\w|_{\Sigma}= M(\Tilde{\h} - \Tilde{\h^\ast}), \, {\frac{\partial \psi}{\partial n}}|_{\Sigma} = 0$, we can derive
    \begin{align*}
        &\int_{Q}(\u^\ast - \u_{Q})\cdot \w \, dxdt + \int_{Q}(\varphi^\ast - \varphi_{Q})\cdot \psi \, dxdt + \int_{\Omega} (\u^\ast(T) - \u_{\Omega})\cdot \w(T) \, dx \no \\ & + \int_{\Omega} (\varphi^\ast(T) - \varphi_{\Omega})\cdot \psi(T) \, dx = \int_{\Sigma} M^\ast\p_1\cdot(\h - \h^\ast) \, dSdt, \quad \forall \h \in \mathcal{U}_{ad}.
    \end{align*}
    Now from the variational inequality \eqref{necessary condition}, \eqref{variational ineqa} follows easily.
\end{remark}
Finally, using the variational formula \eqref{variational ineqa}, we can interpret the optimal boundary control in terms of the adjoint variables.
\begin{corollary}
 Let $\h^\ast \in \mathcal{U}_{ad}$ be a optimal boundary control associated with $(\mathbf{OCP})$. Then $\h^\ast$ and the adjoint system $(\p, \zeta)$ satisfy the projection formula
 \begin{align}
     \h^\ast = \mathcal{P}_{\mathcal{U}_{ad}}\big(-M(\hat{P}\n +\frac{\partial\p}{\partial\n})\big),
 \end{align}
 where $\mathcal{P}_{\mathcal{U}_{ad}}$ is the orthogonal projector from $\mathrm{L}^2(\Sigma)$ onto $\mathcal{U}_{ad}$.
\end{corollary}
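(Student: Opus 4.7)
The plan is to recognize the variational inequality \eqref{variational ineqa} as precisely the classical characterization of the orthogonal projection onto a closed convex set in a Hilbert space, applied to the ambient Hilbert space $\mathrm{L}^2(\Sigma)$, the convex subset $\mathcal{U}_{ad}$, and the element $y := -\hat{P}\n - \tfrac{\partial\p}{\partial\n}$.

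First I would rewrite the inequality from Theorem \ref{1st order necessary optimality condition} in a consolidated form. Combining the three boundary integrals in \eqref{variational ineqa} gives
\begin{align*}
\int_{\Sigma}\Big(\h^\ast - \Big(-\hat{P}\n - \tfrac{\partial\p}{\partial\n}\Big)\Big)\cdot(\h - \h^\ast)\, dSdt \;\ge\; 0, \qquad \forall\, \h \in \mathcal{U}_{ad},
\end{align*}
which is exactly the variational characterization $\langle \h^\ast - y,\, \h - \h^\ast\rangle_{\mathrm{L}^2(\Sigma)} \ge 0$ for all $\h \in \mathcal{U}_{ad}$. Before invoking the projection theorem, I would check the two standing hypotheses needed to apply it: that $\mathcal{U}_{ad}$ is a nonempty, closed, convex subset of the Hilbert space $\mathrm{L}^2(\Sigma)$, and that $y \in \mathrm{L}^2(\Sigma)$. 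Convexity and nonemptiness of $\mathcal{U}_{ad}$ are immediate from its definition \eqref{ad control space}; closedness in $\mathrm{L}^2(\Sigma)$ follows from the continuous embedding of $\mathcal{U}$ into $\mathrm{L}^2(\Sigma)$ together with the weak closure argument already employed in the proof of Theorem \ref{Existence of an optimal control}. The $\mathrm{L}^2(\Sigma)$-integrability of $-\hat{P}\n - \tfrac{\partial \p}{\partial \n}$ is ensured by Theorem \ref{existence adj equ} combined with the trace regularity established in Corollary \ref{exist bdry multiplier}, since $\p_1 = -\hat{P}\n - \tfrac{\partial\p}{\partial\n}$ belongs to $\mathrm{L}^2(0,T;\H^{-1/2}(\partial\Omega))$, and under the stronger assumption $\u_\Omega\in\H^1_{\text{div}}$ the remark following Theorem \ref{existence adj equ} upgrades this to $\mathrm{L}^2(\Sigma)$ regularity.

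Finally I would invoke the standard characterization (see, e.g., \cite{fredi_book}): for a closed convex subset $K$ of a Hilbert space $H$ and any $y\in H$, the unique element $\bar z\in K$ satisfying $\langle \bar z - y, v - \bar z\rangle_H \ge 0$ for every $v\in K$ is precisely $\bar z = \mathcal{P}_K(y)$. Applying this with $H=\mathrm{L}^2(\Sigma)$, $K=\mathcal{U}_{ad}$ and $y=-\hat{P}\n - \tfrac{\partial\p}{\partial\n}$ immediately yields
\begin{align*}
\h^\ast \;=\; \mathcal{P}_{\mathcal{U}_{ad}}\Big(-\hat{P}\n - \tfrac{\partial \p}{\partial \n}\Big),
\end{align*}
which is the claimed projection formula.

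The argument itself is essentially a one-line consequence of Theorem \ref{1st order necessary optimality condition} once the Hilbert-space framework is set up, so the only genuine point requiring care is the verification that $\mathcal{U}_{ad}$ is closed in $\mathrm{L}^2(\Sigma)$ and that the right-hand side $y$ lies in $\mathrm{L}^2(\Sigma)$; both are bookkeeping issues rather than analytic obstacles, being handled by the regularity statements already proved above.
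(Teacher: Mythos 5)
Your overall strategy---reading the variational inequality \eqref{variational ineqa} as the standard characterization $\langle \h^\ast - y,\,\h - \h^\ast\rangle_{\mathrm{L}^2(\Sigma)}\ge 0$ of the projection $\h^\ast=\mathcal{P}_{\mathcal{U}_{ad}}(y)$ onto a nonempty closed convex subset of the Hilbert space $\mathrm{L}^2(\Sigma)$---is exactly the argument the paper intends (the paper states the corollary without proof, as an immediate consequence of Theorem \ref{1st order necessary optimality condition}), and your explicit verification that $\mathcal{U}_{ad}$ is closed in $\mathrm{L}^2(\Sigma)$ is a point the paper glosses over. However, your consolidation step contains a sign error that is not cosmetic. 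Collecting the three integrals in \eqref{variational ineqa} gives
\begin{align*}
\int_{\Sigma}\Big(\h^\ast - \hat{P}\n - \tfrac{\partial\p}{\partial\n}\Big)\cdot(\h - \h^\ast)\,dSdt \;\ge\; 0,
\qquad \forall\, \h\in\mathcal{U}_{ad},
\end{align*}
which matches the projection characterization with $y = +\hat{P}\n + \tfrac{\partial\p}{\partial\n}$, \emph{not} with $y = -\hat{P}\n - \tfrac{\partial\p}{\partial\n}$ as in your rewritten display; your version flips the sign of the adjoint terms and is not equivalent to \eqref{variational ineqa}. One sees the same thing from \eqref{var in}: the inequality $\langle \h^\ast + \p_1,\,\h-\h^\ast\rangle\ge 0$ yields $\h^\ast=\mathcal{P}_{\mathcal{U}_{ad}}(-\p_1)$, and by \eqref{bdry p} one has $-\p_1 = \hat{P}\n + \tfrac{\partial\p}{\partial\n}$. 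A sanity check is the unconstrained case $\mathcal{U}_{ad}=\mathrm{L}^2(\Sigma)$, where the inequality forces $\h^\ast=-\p_1=\hat{P}\n+\tfrac{\partial\p}{\partial\n}$, consistent with the plus sign. Since $\mathcal{U}_{ad}$ is a ball centered at the origin, $\mathcal{P}_{\mathcal{U}_{ad}}(-y)=-\mathcal{P}_{\mathcal{U}_{ad}}(y)$, so the two formulas genuinely differ unless $\h^\ast=0$. In other words, the corollary as printed is inconsistent with \eqref{variational ineqa} (one of the two carries a sign typo), and your proof reproduces the printed formula only because the rewriting step silently changes signs; as a deduction from \eqref{variational ineqa}, that step fails, and the correct conclusion is $\h^\ast = \mathcal{P}_{\mathcal{U}_{ad}}\big(\hat{P}\n + \tfrac{\partial\p}{\partial\n}\big)$.

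A secondary caveat concerns the membership $y\in\mathrm{L}^2(\Sigma)$, which you rightly flag but resolve too quickly. Corollary \ref{exist bdry multiplier} only gives $\p_1\in\mathrm{L}^2(0,T;\H^{-\frac{1}{2}}(\partial\Omega))$, which does not suffice for an $\mathrm{L}^2(\Sigma)$ projection. The remark after Theorem \ref{existence adj equ} upgrades $\p$ to $\mathrm{L}^2(0,T;\H^2)$ when $\u_\Omega\in\H^1_{div}$, whence $\tfrac{\partial\p}{\partial\n}\in\mathrm{L}^2(0,T;\H^{\frac{1}{2}}(\partial\Omega))\subset\mathrm{L}^2(\Sigma)$ by trace; but the trace of $\hat{P}$ requires regularity of $\hat{P}$ beyond the $\mathrm{L}^2(0,T;\mathrm{L}^2_0(\Omega))$ established in Theorem \ref{existence adj equ}, e.g., $\hat{P}\in\mathrm{L}^2(0,T;\mathrm{H}^1(\Omega))$ recovered from the equation $\eqref{adjoint equation}_1$ once $\p$ has the improved regularity. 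This should be stated rather than attributed wholesale to the remark, though it is a bookkeeping matter and not an obstruction to the argument.
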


\bibliographystyle{plain}
\bibliography{mybibliography}

\end{document}